\newcommand{\R}{\mathbb{R}}
\newcommand{\eps}{\varepsilon}
\newcommand{\dm}{\mathrm{dim}}
\newtheorem{theorem}{Theorem}[section]
\newtheorem{coro}[theorem]{Corollary}
\newtheorem{proposition}[theorem]{Proposition}
\newtheorem{definition}[theorem]{Definition}
\newtheorem{lemma}[theorem]{Lemma}
\newtheorem{rem}[theorem]{Remark}
\title[Upper bounds for Steklov eigenvalues]{Upper bounds for the Steklov eigenvalues of warped products}
\author[Brisson]{Jade Brisson}
\address{Marianopolis College.
4873 Westmount Avenue, Westmount, Qu\'ebec, H3Y 1X9, Canada
}
\email{jade\textunderscore brissongg@hotmail.com}
\author[Colbois]{Bruno Colbois}
\address{Institut de Math\'ematiques, Universit\'e de Neuch\^atel, Rue Emile-Argand 11, 2000 Neuch\^atel, Suisse}
\email{bruno.colbois@unine.ch}
\author{{Alexandre} {Girouard}}\address{{D\'epartement de math\'ematiques et de statistique}, {Pavillon Alexandre-Vachon}, {Universit\'e Laval}, {Qu\'ebec QC}, {G1V 0A6}, {Canada}}
\email{alexandre.girouard@mat.ulaval.ca}
\author[Gittins]{Katie Gittins}
\address{Department of Mathematical Sciences, Durham University, Mathematical Sciences and Computer Science Building, Upper Mountjoy Campus, Stockton Road, Durham, DH1 3LE, United Kingdom.}
\email{katie.gittins@durham.ac.uk}
\subjclass[2020]{35P15, 58C40}
\keywords{Steklov eigenvalues, upper bounds, warped products, spectral geometry.}
\date{\today}
\begin{document}
\begin{abstract}
  We obtain upper bounds for the Steklov eigenvalues of warped products $\Omega\times_h\Sigma$, where $\Omega$ is a compact Riemannian manifold with boundary and $\Sigma$ is a closed Riemannian manifold. These bounds involve the volume of $\Omega$ and of $\partial\Omega$ as well as the eigenvalues of the Laplace operator on the fiber $\Sigma$ and the $L^p$-norm of the warping function $h$. The bounds are very different depending on the dimension $n$ of the fiber $\Sigma$ and the value of $p$.
  In some cases, we obtain optimal upper bounds and stability estimates.
\end{abstract}

\maketitle
\section{Introduction}
Let $(\Omega,g_\Omega)$ be a compact, connected Riemannian manifold of dimension $d\geq 1$, with smooth boundary $\partial\Omega$ and let $(\Sigma,g_\Sigma)$ be a closed, connected Riemannian manifold of dimension $n\geq 1$. 
Given a smooth positive function $0<h\in C^\infty(\Omega)$, consider the warped product $M_h:=\Omega\times_h \Sigma$. That is the product manifold $M=\Omega\times\Sigma$ equipped with the Riemannian metric $g_h$ defined by
\[{g_h(x,q):=g_\Omega(x)+h^2(x)g_\Sigma(q)}.\]
A number $\sigma\in\R$ is a Steklov eigenvalue on $M_h$ if there exists a nonzero function $u\in C^\infty(M)$ that satisfies
$$
\begin{cases}
    \Delta u=0&\mbox{ in }M_h,\\
    \partial_\nu u=\sigma u&\mbox{ on }\partial {M_h}.
\end{cases}
$$
Here $\Delta=\Delta_{g_h}$ is the Laplace operator associated to the warped metric $g_h$ and $\partial_\nu u$ is the outward-pointing normal derivative of the function $u$.
The Steklov eigenvalues form a discrete unbounded sequence 
$$0=\sigma_0(M_h)<\sigma_1(M_h)\leq\sigma_2(M_h)\leq\cdots\nearrow+\infty.$$ 
See~\cite{LeMaPo2023} for an introduction and \cite{GiPo2017, CoGiGoSh2024} for surveys of the recent literature on the spectral geometry of the Steklov problem including upper bounds for the Steklov eigenvalues.

In order to obtain upper bounds for the Steklov eigenvalues of Riemannian manifolds, it is necessary to impose geometric constraints (see \cite{CoElGi2019, CiGi2018}). For example, in \cite{BrCoGi2024, CoGiGi2019,CoVe2021,Sel25,Tsc24} upper bounds are obtained under the assumption that the metrics are of revolution type.
In a similar spirit, warped products seem to be a next natural class to consider. Relevant papers include~\cite{DaHeNi2021,DaKaNi2021} and also \cite{GiPoly2024, Xi2021,Xi2022}, which are even closer to this project.

The initial question that motivated this project was to determine, for each fixed index $k$, how large $\sigma_k(M_h)$ can be among all warping functions $h\in C^\infty(\Omega)$ that satisfy $h\equiv 1$ on the boundary $\partial\Omega$. We succeeded in solving this problem completely for warped products with fibers $\Sigma$ of dimension $n\geq 2$. The eigenvalues of the Laplace operator on the fiber $\Sigma$ play a prominent role. We list them as
$$0=\lambda_0<\lambda_1\leq\lambda_2\leq\cdots\to+\infty.$$
The following result is a particular case of the main results of this paper.
\begin{theorem}\label{thm:initialproject}
Let $M_h=\Omega\times_h\Sigma$ be a warped product as above, with fiber $\Sigma$ of dimension $n\geq 2$.  Then,
$$\sup\{\sigma_k(M_h)\,:\,h=1 \text{ on }\partial\Omega\}=
\begin{cases}
    \frac{\lambda_k\vert \Omega\vert}{\vert \partial \Omega \vert}&\text{ if } n=2,\\
    +\infty&\text{ if }n\geq 3.
\end{cases}
$$
\end{theorem}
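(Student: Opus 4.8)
The plan is to prove the two cases separately, since they require genuinely different ideas.

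\textbf{The case $n=2$: upper bound.}
First I would establish the inequality $\sigma_k(M_h)\leq \lambda_k|\Omega|/|\partial\Omega|$ whenever $h\equiv 1$ on $\partial\Omega$. The natural approach is the variational (min-max) characterization of $\sigma_k$ together with well-chosen test functions. Let $\phi_0,\phi_1,\dots,\phi_k$ be eigenfunctions of $\Delta_{g_\Sigma}$ on $\Sigma$ associated to $\lambda_0,\dots,\lambda_k$, and consider test functions on $M_h$ of the form $u_j(x,q)=\phi_j(q)$ (i.e. pulled back from the fiber, constant in the $\Omega$ direction). These are mutually $L^2(\partial M_h)$-orthogonal. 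The key computation is to compare the Rayleigh quotient
\[
\frac{\int_{M_h}|\nabla_{g_h} u_j|^2\,dV_{g_h}}{\int_{\partial M_h}u_j^2\,dA_{g_h}}.
\]
With $g_h=g_\Omega+h^2g_\Sigma$, the volume form is $dV_{g_h}=h^n\,dV_{g_\Omega}\,dV_{g_\Sigma}$ and the gradient of a fiber function has $|\nabla_{g_h}u_j|_{g_h}^2=h^{-2}|\nabla_{g_\Sigma}\phi_j|_{g_\Sigma}^2$. Hence the numerator is $\int_\Omega h^{n-2}\,dV_{g_\Omega}\cdot\lambda_j\|\phi_j\|_{L^2(\Sigma)}^2$. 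The denominator, using $h\equiv 1$ on $\partial\Omega$, is $|\partial\Omega|\,\|\phi_j\|_{L^2(\Sigma)}^2$ (the boundary of $M_h$ is $\partial\Omega\times\Sigma$ with the product metric since $h=1$ there). When $n=2$ the factor $h^{n-2}=h^0=1$, so the Rayleigh quotient of $u_j$ is exactly $\lambda_j|\Omega|/|\partial\Omega|$, and min-max gives $\sigma_k(M_h)\leq\lambda_k|\Omega|/|\partial\Omega|$. I expect this direction to be the routine part; the main subtlety is checking that one may indeed use non-smooth or merely Lipschitz test functions and that the min-max over a $(k+1)$-dimensional space is legitimate, which is standard.

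\textbf{The case $n=2$: sharpness, and the case $n\geq 3$: unboundedness.}
For sharpness when $n=2$, I would show the supremum is attained in the limit (or exactly) by degenerating $h$ so that the "Dirichlet-to-Neumann" contribution from the $\Omega$-direction is suppressed and the spectrum is forced to concentrate on the fiber modes. Concretely, one takes a sequence $h_\varepsilon$ equal to $1$ near $\partial\Omega$ but very small (or very large, depending on which makes the harmonic extension in the interior cheap) on a large portion of $\Omega$; the eigenfunctions localize and the eigenvalues converge to $\lambda_k|\Omega|/|\partial\Omega|$. For $n\geq 3$, the same family of fiber test functions now has numerator $\int_\Omega h^{n-2}\lambda_j\|\phi_j\|^2$, and since $n-2\geq 1$ we can make $\int_\Omega h^{n-2}\,dV_{g_\Omega}$ as large as we like by letting $h$ be a large bump in the interior of $\Omega$ while keeping $h\equiv 1$ on $\partial\Omega$ (this does not affect the denominator $|\partial\Omega|\|\phi_j\|^2$). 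Using the $u_1,\dots,u_k$ together with the constant $u_0$, min-max from below — one needs the \emph{lower} bound form, i.e. exhibiting a $(k+1)$-dimensional space on which the Rayleigh quotient is bounded \emph{below} — actually one must be slightly careful: min-max gives upper bounds from test spaces, so to show $\sigma_k$ is large one instead shows all competing $k$-dimensional subspaces must contain a function with large Rayleigh quotient. A cleaner route is to note that the first $k$ nonzero Steklov eigenvalues are bounded below by the first $k$ nonzero eigenvalues of an associated weighted problem on $\Omega$ coupled with the fiber Laplacian, and as the weight $h^{n-2}$ blows up so does $\sigma_k$. I expect \emph{this} to be the main obstacle: getting a clean lower bound for $\sigma_k(M_h)$ in terms of $h$ and $\lambda_k$, because lower bounds on Steklov eigenvalues are generally delicate and require either a separation-of-variables decomposition of the Steklov spectrum of $M_h$ or a careful trace/capacity argument.

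\textbf{Structural tool: separation of variables.}
The cleanest implementation of both halves, and the one I would actually carry out, uses the fact that the Steklov spectrum of a warped product $\Omega\times_h\Sigma$ decomposes according to the eigenspaces of $\Delta_{g_\Sigma}$: writing $u=\sum_m f_m(x)\phi_m(q)$, harmonicity of $u$ on $M_h$ becomes, for each $m$, an ODE/PDE on $\Omega$ of the form $\operatorname{div}_{g_\Omega}(h^n\nabla f_m)=\lambda_m h^{n-2}f_m$, and the Steklov condition becomes $\partial_\nu f_m=\sigma f_m$ on $\partial\Omega$ (with $h=1$ there). Thus $\sigma_k(M_h)$ is governed by the family of weighted Steklov-type problems indexed by $\lambda_m$, and the eigenvalues interlace across these. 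In the $m=0$ sector one recovers a weighted Steklov problem on $\Omega$ whose nonzero eigenvalues we can control; in the $m\geq 1$ sectors, the zeroth eigenvalue of the weighted problem (the analogue of a ground state) is comparable to $\lambda_m \big(\int_\Omega h^{n-2}\big)\big/\big(\int_\Omega h^n\cdot(\text{stuff})\big)$ — and when $n=2$ this collapses to exactly $\lambda_m|\Omega|/|\partial\Omega|$ in the limit of suitable $h$, while for $n\geq 3$ it is unbounded. Assembling the global list $\sigma_0\leq\sigma_1\leq\cdots$ from these sectors and optimizing over $h$ gives Theorem~\ref{thm:initialproject}. The delicate point here is the interlacing/assembly step: one must confirm that the $k$-th global Steklov eigenvalue equals the $k$-th entry of the merged sorted list of sector eigenvalues, which follows from completeness of the fiber eigenbasis and an $L^2$-orthogonality argument on $\partial M_h$, but deserves to be stated carefully.
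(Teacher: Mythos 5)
Your upper bound for $n=2$ is correct and matches the paper in spirit: the paper's \eqref{ineq:consttest}, applied with the constant trial function $a\equiv 1$ in the separated Rayleigh quotient $R_{\lambda_j,h}$, is exactly your computation with the fiber functions $u_j=\phi_j$ (the two are equivalent since plugging $a\equiv 1$ into the separated problem is the same as testing the global quotient with $\phi_j$). Your structural tool --- separation of variables, decomposing the Steklov spectrum of $M_h$ into the spectra of the weighted auxiliary problems on $\Omega$ indexed by the fiber eigenvalues $\lambda_m$ --- is likewise the same framework the paper builds on (Proposition~\ref{prop:separation}).

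However, both sharpness claims contain a genuine gap, and you correctly flag it yourself but do not close it. The crux is the sentence ``as the weight $h^{n-2}$ blows up so does $\sigma_k$.'' This is exactly the step that does not follow. The quantity $\lambda_m\int_\Omega h^{n-2}\,dV_\Omega/|\partial\Omega|$ is obtained by putting $a\equiv 1$ into the Rayleigh quotient $R_{\lambda_m,h}$; it is therefore an \emph{upper} bound on $\sigma_{\lambda_m,0}(h)$, and making an upper bound large tells you nothing about $\sigma_{\lambda_m,0}(h)$ itself. Indeed, if $h$ is made enormous only on a small region of $\Omega$, a trial function $a$ that vanishes on that region annihilates the $\lambda_m a^2 h^{n-2}$ contribution while hardly paying in the $|da|^2 h^n$ term (note the Dirichlet term carries the stronger weight $h^n$), so $\sigma_{\lambda_m,0}(h)$ need not grow at all. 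The paper does not attempt to pass directly from ``$\int_\Omega h^{n-2}$ large'' to ``$\sigma_k$ large.'' Instead it proves the much more delicate statement \eqref{inegsharp: M_C} that the supremum over $h\leq C$ with $h\equiv 1$ on $\partial\Omega$ equals $C^n\sigma_k(M_C)$; this requires constructing $h_\delta$ that jump from $1$ to $C$ across a thin boundary layer, passing to a nearby metric that is a product near $\partial\Omega$ (Lemma~\ref{lemma:quasi-iso}), and controlling the loss of orthonormality of trace restrictions via the almost-orthogonality Lemmas~\ref{lemma:deBruno} and~\ref{lemma:deBrunobis}. One then combines this with the separate computation $\lim_{C\to\infty}C^2\sigma_k(M_C)=\lambda_k|\Omega|/|\partial\Omega|$ (Section~\ref{Spectrum $M_C$}, via implicit differentiation of the Steklov--Helmholtz ground state in the energy parameter $\mu=\lambda/C^2$), giving $C^n\sigma_k(M_C)\to\lambda_k|\Omega|/|\partial\Omega|$ if $n=2$ and $\to\infty$ if $n\geq 3$.

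Two further points: (a) Your hedge ``very small (or very large, depending on which makes the harmonic extension cheap)'' is a sign the mechanism isn't pinned down --- the answer is unambiguously \emph{large}, precisely because a large $h$ in the interior makes the $|da|^2 h^n$ term expensive and forces the minimizer towards the constant, pushing $\sigma_{\lambda_m,0}$ up towards the test value; small $h$ has the opposite effect. (b) The paper also proves the upper bound $\sigma_k(M_h)<\lambda_k|\Omega|/|\partial\Omega|$ is strict (so the supremum for $n=2$ is never attained); this follows by observing that equality would force $a\equiv 1$ to be an eigenfunction, contradicting $\lambda_k>0$. Your sketch omits this.
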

This could be compared with the results of~\cite{CoElGi2019,CiGi2018} where a similar question for conformal perturbations is studied.
The upper bound for $n=2$ will follow from upper bounds for $\sigma_k(M_h)$ in terms of various constraints, involving the dimension $n$ of the fiber $\Sigma$ and $L^p$-norms of the warping function $h$. The limiting behaviour is more subtle. It depends on comparison with cylindrical models  where the warping function $h$ tends to $+\infty$. Unfortunately, we have not been able to solve the case where the fiber has dimension $n=1$. See Section \ref{sec:dicussion} for a discussion.

\subsection{Main results}
Our first main result gives sharp upper bounds for $\sigma_k(M_h)$. It involves the warped spaces $M_C = \Omega \times_C \Sigma$, where the warping function $h=C\geq 1$ is constant.
\begin{theorem}\label{thm:MainIntro}
Let $M_h=\Omega\times_h\Sigma$ be a warped product as above, with fiber $\Sigma$ of dimension $n\geq 2$. Let $C\geq 1$. Then, 
\begin{enumerate}
\item[(i)] For each index $k\geq 1$, and each $0<h\in C^\infty(\Omega)$,
 \begin{equation} \label{ineq:introgeneral}
\sigma_{k}(M_h)<\lambda_k\frac{\int_{\Omega}h^{n-2}\,dV_\Omega}{\int_{\partial\Omega}h^n\,dA_{\partial\Omega}}.
\end{equation}

\item[(ii)] Moreover, if $h\leq C$ on $\Omega$ and $h=1$ on $\partial\Omega$, then for each index $k\geq 1$,
\begin{equation}\label{intro:doubleinequality}
    \sigma_k(M_h)< C^n\sigma_k(M_C)<C^{n-2}\frac{\lambda_k\vert \Omega\vert}{\vert \partial \Omega \vert}.
\end{equation}

\item[(iii)] Inequality \eqref{intro:doubleinequality} is sharp in the following sense:
\begin{equation} \label{inegsharp: M_C}
\sup\{\sigma_k(M_h)\,:\, h\le C, h=1\ \text{on} \  \partial \Omega\} =C^n \sigma_k(M_C),
\end{equation}
and 
\begin{equation} \label{Case n>2}
\lim_{C\to \infty} C^2\sigma_k(M_C)=\frac{\lambda_k\vert \Omega\vert}{\vert \partial \Omega \vert} .
\end{equation}
\end{enumerate}
\end{theorem}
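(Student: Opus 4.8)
The plan is to run everything through the variational characterisation $\sigma_k(N)=\min_{\dim E=k+1}\max_{0\neq u\in E}R_N(u)$, where $R_N(u)=\int_N|\nabla u|^2\,dV\big/\int_{\partial N}u^2\,dA$, together with the warped-product identities $dV_{g_h}=h^n\,dV_\Omega\,dV_\Sigma$, $dA_{g_h}=h^n\,dA_{\partial\Omega}\,dV_\Sigma$ on $\partial M_h=\partial\Omega\times\Sigma$, $|\nabla_{g_h}(\pi^*f)|^2_{g_h}=h^{-2}|\nabla_\Sigma f|^2_{g_\Sigma}$ and $\Delta_{g_h}(\pi^*f)=-\lambda\,h^{-2}\,\pi^*f$ for $-\Delta_\Sigma f=\lambda f$, $\pi\colon M\to\Sigma$ the projection.

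\emph{Part (i).} I would take $E$ to be the span of $\pi^*f_0,\dots,\pi^*f_k$, where $f_0,\dots,f_k$ is an $L^2(\Sigma)$-orthonormal family of eigenfunctions of $-\Delta_\Sigma$ with eigenvalues $\lambda_0,\dots,\lambda_k$; this family restricts injectively to $\partial M_h$. A direct computation with the identities above gives, for $u=\sum_i c_i\pi^*f_i$,
\[
R_{M_h}(u)=\frac{\int_\Omega h^{n-2}\,dV_\Omega}{\int_{\partial\Omega}h^{n}\,dA_{\partial\Omega}}\cdot\frac{\sum_i\lambda_i c_i^2}{\sum_i c_i^2}\le\lambda_k\,\frac{\int_\Omega h^{n-2}\,dV_\Omega}{\int_{\partial\Omega}h^{n}\,dA_{\partial\Omega}},
\]
which is \eqref{ineq:introgeneral} with ``$\le$''. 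For the strict inequality: if equality held, then $E$ would realise the min-max, hence a maximiser $u_0$ of $R_{M_h}|_E$ would be a Steklov eigenfunction of $M_h$; but any such maximiser lies in $\pi^*(\ker(-\Delta_\Sigma-\lambda_k))$, and $\Delta_{g_h}u_0=-\lambda_k h^{-2}u_0\neq 0$ shows it is not harmonic, a contradiction since $k\ge1$. (One can also conclude by perturbing the maximising block of $E$ along a descent direction of $R_{M_h}$, which exists precisely because $\pi^*f_k$ is not a Steklov eigenfunction.)

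\emph{Part (ii).} The right-hand inequality in \eqref{intro:doubleinequality} is \eqref{ineq:introgeneral} for the constant warping $h\equiv C$, where $\int_\Omega C^{n-2}\,dV_\Omega\big/\int_{\partial\Omega}C^{n}\,dA_{\partial\Omega}=C^{-2}|\Omega|/|\partial\Omega|$; multiply by $C^n$. For the left-hand inequality I would compare Rayleigh quotients on the common underlying manifold $M=\Omega\times\Sigma$: since $h\le C$ and $n\ge2$ we have $h^n\le C^n$, $h^{n-2}\le C^{n-2}$ on $\Omega$, so splitting the Dirichlet energy into its $\Omega$- and $\Sigma$-parts gives $\int_{M_h}|\nabla u|^2\,dV_{g_h}\le\int_{M_C}|\nabla u|^2\,dV_{g_C}$; and $h\equiv 1$ on $\partial\Omega$ yields the exact identity $\int_{\partial M_C}u^2\,dA_{g_C}=C^n\int_{\partial M_h}u^2\,dA_{g_h}$. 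Hence $R_{M_h}(u)\le C^nR_{M_C}(u)$ for every $u\in C^\infty(M)$, and the min-max gives $\sigma_k(M_h)\le C^n\sigma_k(M_C)$; strictness comes from the equality case, since equality would force $h\equiv C$, impossible when $h\equiv 1$ on $\partial\Omega$ and $h\not\equiv C$.

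\emph{Part (iii), equation \eqref{inegsharp: M_C}.} By part (ii) the left-hand side is $\le C^n\sigma_k(M_C)$. For the reverse inequality I would take a nested family $h_j\in C^\infty(\Omega)$ with $1\le h_j\le h_{j+1}\le C$, $h_j\equiv 1$ on $\partial\Omega$, and $h_j\equiv C$ outside a collar of $\partial\Omega$ of width $1/j$. Using $\|u\|^2_{L^2(\partial M_{h_j})}=C^{-n}\|u\|^2_{L^2(\partial M_C)}$ we get $\sigma_k(M_{h_j})=C^n\mu_k(q_j)$, where $\mu_k(q_j)$ is the $k$-th variational eigenvalue of the Dirichlet form $q_j(u)=\int_{M_{h_j}}|\nabla u|^2\,dV_{g_{h_j}}$ relative to the fixed form $u\mapsto\|u\|^2_{L^2(\partial M_C)}$ on the fixed space $H^1(\Omega\times\Sigma)$. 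The $q_j$ are nonnegative, uniformly comparable, monotone increasing in $j$, and $q_j(u)\uparrow\int_{M_C}|\nabla u|^2\,dV_{g_C}$ for every $u$, since the defect is carried by sets of shrinking measure. The heart of the argument — and the step I expect to be the main obstacle — is to upgrade this to $\mu_k(q_j)\uparrow\sigma_k(M_C)$: the $\limsup$ is immediate from a smooth test space, while for the $\liminf$ one extracts a weak $H^1$-limit of optimal $(k{+}1)$-dimensional subspaces, uses the compactness of $H^1(\Omega\times\Sigma)\hookrightarrow L^2(\partial M_C)$ to preserve both the dimension and the boundary normalisation, passes to the limit in the inequalities $q_m\le q_j$ (for each fixed $m$), and then lets $m\to\infty$. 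This gives $\sigma_k(M_{h_j})\to C^n\sigma_k(M_C)$, which together with the upper bound proves \eqref{inegsharp: M_C}.

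\emph{Part (iii), equation \eqref{Case n>2}.} The bound $C^2\sigma_k(M_C)<\lambda_k|\Omega|/|\partial\Omega|$ is \eqref{ineq:introgeneral} for $h\equiv C$, so $\limsup_{C\to\infty}C^2\sigma_k(M_C)\le\lambda_k|\Omega|/|\partial\Omega|$. For the matching lower bound I would separate variables: $M_C=\Omega\times(\Sigma,C^2g_\Sigma)$ is a Riemannian product with product boundary, so its Steklov spectrum is the union over the fibre modes of the spectra of the problems $P_\mu$ on $\Omega$ (namely $\Delta_\Omega\phi=\mu\phi$ in $\Omega$, $\partial_\nu\phi=\sigma\phi$ on $\partial\Omega$) with $\mu=\lambda_i/C^2$, multiplicities inherited from $\Sigma$. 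Writing $\sigma_0(\Omega,\mu)\le\sigma_1(\Omega,\mu)\le\cdots$ for the eigenvalues of $P_\mu$, these are nondecreasing in $\mu\ge0$ with $\sigma_\ell(\Omega,0)=\sigma_\ell(\Omega)$; hence $\sigma_\ell(\Omega,\mu)\ge\sigma_1(\Omega)>0$ for $\ell\ge1$, while $\sigma_0(\Omega,\mu)\le\mu|\Omega|/|\partial\Omega|$ by the test function $\phi\equiv1$. Consequently, for $C$ large the $k+1$ smallest Steklov eigenvalues of $M_C$ are exactly $\sigma_0(\Omega,\lambda_0/C^2)\le\cdots\le\sigma_0(\Omega,\lambda_k/C^2)$, so $\sigma_k(M_C)=\sigma_0(\Omega,\lambda_k/C^2)$ and $C^2\sigma_k(M_C)=\lambda_k\,\sigma_0(\Omega,\mu)/\mu$ with $\mu=\lambda_k/C^2\to0$. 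A short variational/compactness argument — the ground-state minimiser of $P_\mu$ converges in $H^1(\Omega)$ to a constant as $\mu\to0^+$ — shows $\sigma_0(\Omega,\mu)/\mu\to|\Omega|/|\partial\Omega|$, which yields \eqref{Case n>2}.
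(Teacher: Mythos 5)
Your proof is correct, and parts (i) and (ii) run along essentially the same lines as the paper's: test with the pulled-back eigenfunctions $\pi^*f_i$ (equivalently, test the auxiliary separated Rayleigh quotient \eqref{RayleighQuotient} with $a\equiv 1$), compare $h^n\le C^n$, $h^{n-2}\le C^{n-2}$ with the boundary term unchanged because $h\equiv 1$ on $\partial\Omega$, and extract strictness from the equality case. One small remark on your strictness argument in (i): the bare claim that ``a maximiser of $R_{M_h}|_E$ would be a Steklov eigenfunction'' is not true for an arbitrary maximiser; the correct statement, which you can get by looking at the kernel of the projection of $E$ onto the span of eigenfunctions with eigenvalue $<\sigma_k$, is that $E$ contains \emph{some} $\sigma_k$-eigenfunction, and in your setting that eigenfunction necessarily lies in $\pi^*\ker(-\Delta_\Sigma-\lambda_k)$, whence the contradiction. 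The paper sidesteps this entirely by exploiting that the auxiliary eigenvalue $\sigma_{\lambda_k,0}(h)$ is a genuine minimum (not a min–max), so equality immediately forces $a\equiv 1$ to solve the Euler–Lagrange equation $L_h a+\lambda_k h^{-2}a=0$.

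Where you genuinely diverge is part (iii), and your route is both different and, I think, cleaner.

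\begin{itemize}
\item For \eqref{inegsharp: M_C}, the paper constructs an explicit geometric machinery: it modifies $g_\Omega$ by a quasi-isometry to make a collar $T_\delta$ exactly a Riemannian product (Lemma \ref{lemma:quasi-iso}), introduces a shrunken domain $\Omega_\delta$ quasi-isometric to $\Omega$, and controls the transition via the almost-orthogonality Lemmas \ref{lemma:deBruno}--\ref{lemma:deBrunobis}, finally bounding everything by quasi-isometric ratios close to $1$. You instead view the $k$-th Steklov eigenvalue of $M_{h_j}$ as the $k$-th variational eigenvalue of a quadratic form $q_j$ on the \emph{fixed} Hilbert space $H^1(\Omega\times\Sigma)$, relative to the fixed boundary quadratic form $\|\cdot\|_{L^2(\partial M_C)}^2$, and observe that $q_j\uparrow q_\infty$ monotonically (both coefficients $h^n$ and $h^{n-2}$ are nondecreasing in $h$ since $n\ge 2$). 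Monotone convergence of forms plus the compactness of the trace $H^1\hookrightarrow L^2(\partial M_C)$ gives Mosco-type convergence of the eigenvalues: the $\limsup$ bound is immediate from $q_j\le q_\infty$, and for the $\liminf$ you pass to weak $H^1$ limits of optimal $(k+1)$-dimensional subspaces, use strong $L^2(\partial)$ convergence to keep the dimension and boundary normalisation, invoke weak lower semicontinuity of $q_m$ to get $q_m(u)\le(\liminf\mu_k(q_j))\|u\|^2$ for each fixed $m$, and finish by letting $m\to\infty$. This replaces the paper's geometric bookkeeping by a soft functional-analytic argument that works on a fixed function space; the paper's argument, on the other hand, is more explicit and does not require the family $h_j$ to be nested.

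\item For \eqref{Case n>2}, the paper computes the derivative $\frac{d}{d\mu}\sigma_{\mu,0}\big|_{\mu=0}=|\Omega|/|\partial\Omega|$ by a Hadamard-type variation using the weak formulation and the normalisation $\int_{\partial\Omega}f_\mu^2=1$, then identifies $C^2\sigma_k(M_C)$ with the difference quotient $\lambda_k\frac{\sigma_{\lambda_k/C^2,0}-\sigma_{0,0}}{\lambda_k/C^2}$. You instead prove the same limit by a soft argument: the upper bound $\sigma_0(\Omega,\mu)\le\mu|\Omega|/|\partial\Omega|$ from the constant test function, and the lower bound $\sigma_0(\Omega,\mu)/\mu\ge\int_\Omega f_\mu^2\to|\Omega|/|\partial\Omega|$ from weak $H^1$-compactness of the minimisers (whose Dirichlet energy vanishes in the limit, forcing the weak limit to be the normalised constant). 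Both arguments rely on the common observation that for $C$ large the first $k+1$ Steklov eigenvalues of $M_C$ are $\sigma_0(\Omega,\lambda_j/C^2)$, $j=0,\dots,k$, because $\sigma_{\lambda,1}\ge\sigma_1(\Omega)>0$. The paper's derivative computation gives an exact first-order expansion and is slightly more quantitative; your compactness argument needs fewer regularity facts about the eigenvalue curve $\mu\mapsto\sigma_{\mu,0}$.
\end{itemize}

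In short: the proposal is correct; parts (i)--(ii) are the paper's argument in a mild disguise, while part (iii) gives a genuinely different and arguably cleaner proof of the two saturation statements, trading the paper's explicit quasi-isometry and collar constructions for monotone (Mosco) convergence of quadratic forms and standard $H^1$-compactness.
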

Theorem~\ref{thm:initialproject} is an immediate consequence of inequality \eqref{intro:doubleinequality} and of the asymptotics given in \eqref{inegsharp: M_C} and in \eqref{Case n>2}.
The proof of Theorem~\ref{thm:MainIntro} uses separation of variables which gives the decomposition of the Steklov eigenvalues of $M_h$ as a union of the spectra of auxiliary Steklov problems for diffusion operators $-\Delta-\frac{n}{h}\nabla h+\lambda_kh^{-2}$ on $\Omega$. Once this is available, the proof of the upper bound in part (i) is a straightforward application of the variational characterisation of the eigenvalues. However, the main difficulty resides in the proof of the saturation results in part (iii). 

We record the case $n=2$ separately since it will play a special role.
\begin{coro}
Let $M_h=\Omega\times_h\Sigma$ be a warped product as above, with fiber $\Sigma$ of dimension $n=2$. Let $C\geq 1$.
If $h\leq C$ on $\Omega$ and $h=1$ on $\partial \Omega$, then for each $k\geq 1$,
\begin{equation} \label{Case n=2}
\sigma_k(M_h)< C^2\sigma_k(M_C)<\frac{\lambda_k\vert \Omega\vert}{\vert \partial \Omega \vert},
\end{equation}
and
\begin{equation} \label{Case n=2 sharp}
\sup\{\sigma_k(M_h), h=1\ \text{on} \  \partial \Omega\} =\frac{\lambda_k\vert \Omega\vert}{\vert \partial \Omega \vert} .
\end{equation}
\end{coro}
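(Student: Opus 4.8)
The plan is to deduce the corollary directly from Theorem~\ref{thm:MainIntro} by setting $n=2$. The first assertion, inequality \eqref{Case n=2}, is nothing but \eqref{intro:doubleinequality} specialized to $n=2$: since $C^{n-2}=C^{0}=1$, the chain $\sigma_k(M_h)<C^{n}\sigma_k(M_C)<C^{n-2}\lambda_k\vert\Omega\vert/\vert\partial\Omega\vert$ collapses to $\sigma_k(M_h)<C^{2}\sigma_k(M_C)<\lambda_k\vert\Omega\vert/\vert\partial\Omega\vert$, so there is nothing further to prove there.

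For the upper bound in \eqref{Case n=2 sharp} I would take an arbitrary $0<h\in C^\infty(\Omega)$ with $h=1$ on $\partial\Omega$ --- no pointwise bound required --- and feed it into \eqref{ineq:introgeneral} with $n=2$. The numerator $\int_\Omega h^{0}\,dV_\Omega$ equals $\vert\Omega\vert$, while $h\equiv1$ on $\partial\Omega$ makes the denominator $\int_{\partial\Omega}h^{2}\,dA_{\partial\Omega}$ equal to $\vert\partial\Omega\vert$. Hence $\sigma_k(M_h)<\lambda_k\vert\Omega\vert/\vert\partial\Omega\vert$ for every admissible warping function, and taking the supremum gives the inequality ``$\le$'' in \eqref{Case n=2 sharp}; in passing, this also shows the supremum is never attained.

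To obtain the reverse inequality I would exploit the sharpness statement of part (iii). Fix $C\ge1$. Every $h$ satisfying $h\le C$ and $h=1$ on $\partial\Omega$ is, a fortiori, admissible for the supremum in \eqref{Case n=2 sharp}, so \eqref{inegsharp: M_C} with $n=2$ yields
\[
\sup\{\sigma_k(M_h):h=1\ \text{on}\ \partial\Omega\}\ \ge\ \sup\{\sigma_k(M_h):h\le C,\ h=1\ \text{on}\ \partial\Omega\}\ =\ C^{2}\sigma_k(M_C).
\]
Since $C\ge1$ was arbitrary, letting $C\to\infty$ and using the limit \eqref{Case n>2} gives $\sup\{\sigma_k(M_h):h=1\ \text{on}\ \partial\Omega\}\ge\lambda_k\vert\Omega\vert/\vert\partial\Omega\vert$. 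Combining this with the previous paragraph proves \eqref{Case n=2 sharp}.

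I do not expect any genuine obstacle here: the entire analytic burden --- the variational bound \eqref{ineq:introgeneral}, the realization of the supremum by warping functions bounded by $C$ in \eqref{inegsharp: M_C}, and the asymptotics \eqref{Case n>2} --- is already carried by Theorem~\ref{thm:MainIntro}. The only point to watch is the bookkeeping: one should record that $\{h=1\ \text{on}\ \partial\Omega\}=\bigcup_{C\ge1}\{h\le C,\ h=1\ \text{on}\ \partial\Omega\}$ (a smooth function on the compact manifold $\Omega$ is bounded, and $h=1$ on $\partial\Omega$ forces $\max_\Omega h\ge1$), so that the limit $C\to\infty$ really does recover the full supremum, and that by \eqref{Case n=2} this supremum is approached only through the degenerating family $M_C$ as $C\to\infty$ and is not a maximum.
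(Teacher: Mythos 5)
Your proof is correct and follows the route the paper intends: the corollary is stated without its own proof precisely because it is a direct specialization of Theorem~\ref{thm:MainIntro} to $n=2$, combining \eqref{intro:doubleinequality} for the first inequality, \eqref{ineq:introgeneral} for the upper bound on the supremum, and \eqref{inegsharp: M_C} together with \eqref{Case n>2} for the matching lower bound. Your bookkeeping remark that the admissible class $\{h=1$ on $\partial\Omega\}$ is exhausted by the nested sets $\{h\le C,\ h=1$ on $\partial\Omega\}$ as $C\to\infty$, and that the supremum is therefore not attained, is a useful clarification that the paper leaves implicit.
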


In our construction, \eqref{Case n=2 sharp} is attained by a sequence of warping functions that become very large in the interior of $\Omega$. The next result shows that this condition is actually required in order to approach the optimal bound.
Indeed, by using  a more precise trial function we obtain the following quantitative stability improvement of \eqref{Case n=2}.
\begin{theorem} \label{thm:Stability}
Let $n=2$. For each index $k\geq 1$, if $h_\eps$ is a family of warping functions such that $h_\eps\equiv 1$ on $\partial\Omega$ and if
$$\sigma_k(M_{h_\eps})\xrightarrow{\eps\to0}\frac{\lambda_k|\Omega|}{|\partial\Omega|},$$
then for each subdomain $D\subset\Omega$, 
\begin{equation}\label{eq:limstab}
    \lim_{\eps\to 0}\int_{D}h_\eps^2\,dV_{\Omega}=+\infty.
\end{equation}
More precisely, for any warping function $h$ such that $h \equiv 1$ on $\partial \Omega$, let $\delta(h):= \frac{\lambda_k \vert \Omega\vert}{\vert \partial \Omega\vert}-\sigma_k(M_{h})>0$. If $q\in \Omega$ and $B(q,r) \subset \Omega$ is the ball of radius $r$ and centre $q$, then the following quantitative estimate holds:
$$
\int_{B(q,r)}h^2dV_{\Omega}\ge \frac{\lambda_kr^2}{4\delta(h)}\frac{\vert B(q,\frac{r}{2})\vert}{\vert\partial \Omega\vert}-\lambda_kr^2 \vert B(q,r)\vert.
$$    
\end{theorem}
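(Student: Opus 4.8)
The plan is to upgrade the proof of the upper bound \eqref{ineq:introgeneral} in the case $n=2$ by replacing the constant trial function used there with one that ``sees'' the warping function $h$ on a fixed ball. Recall from the separation of variables described after Theorem~\ref{thm:MainIntro} that the Steklov spectrum of $M_h$ splits along the eigenmodes $\phi_0,\phi_1,\dots$ of $\Sigma$ (normalised so that $\|\phi_j\|_{L^2(\Sigma)}=1$, with $\phi_0$ constant), and that for $f\in C^\infty(\Omega)$ depending only on the base, the Rayleigh quotient of $f\phi_j$ on $M_h$ equals, when $n=2$ and $h\equiv 1$ on $\partial\Omega$,
\[
 R_j(f)=\frac{\int_{\Omega}h^2|\nabla f|^2\,dV_\Omega+\lambda_j\int_{\Omega}f^2\,dV_\Omega}{\int_{\partial\Omega}f^2\,dA_{\partial\Omega}}.
\]
Since eigenfunctions on the fiber are orthogonal both in $L^2(\Sigma)$ and in the Dirichlet form, on a trial space $E=\mathrm{span}\{f_0\phi_0,\dots,f_k\phi_k\}$ the Rayleigh quotient of every $u\in E$ is a convex combination of $R_0(f_0),\dots,R_k(f_k)$; hence, provided the $f_j\phi_j$ are linearly independent, the min-max principle yields $\sigma_k(M_h)\le\max_{0\le j\le k}R_j(f_j)$.

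I would then take $f_j\equiv 1$ for $0\le j\le k-1$, so that $R_j(1)=\lambda_j|\Omega|/|\partial\Omega|\le\lambda_{k-1}|\Omega|/|\partial\Omega|$, and $f_k=1-a\chi$, where $a\in(0,1)$ is a parameter and $\chi\in C^\infty_c(B(q,r))$ is a cut-off with $0\le\chi\le 1$, $\chi\equiv 1$ on $B(q,r/2)$ and $|\nabla\chi|\le 2/r$. If $\lambda_k$ is a multiple fiber eigenvalue, the same $f_k=1-a\chi$ is used in each of the sectors realising $\lambda_k$; the span stays $(k+1)$-dimensional and admissible, since distinct fiber eigenfunctions are linearly independent and $1-a\chi>0$. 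Using $\chi=0$ on $\partial\Omega$, a direct computation gives
\[
 R_k(1-a\chi)=\frac{a^2\int_{\Omega}h^2|\nabla\chi|^2\,dV_\Omega+\lambda_k\bigl(|\Omega|-2a\int_{\Omega}\chi\,dV_\Omega+a^2\int_{\Omega}\chi^2\,dV_\Omega\bigr)}{|\partial\Omega|},
\]
which for small $a$ exceeds $\lambda_{k-1}|\Omega|/|\partial\Omega|$ (and if it does not, the estimate below only improves). Consequently, with $\delta(h)=\lambda_k|\Omega|/|\partial\Omega|-\sigma_k(M_h)>0$,
\[
 \delta(h)\ge\frac{\lambda_k|\Omega|}{|\partial\Omega|}-R_k(1-a\chi)=\frac{2a\lambda_k\int_{\Omega}\chi\,dV_\Omega-a^2\bigl(\int_{\Omega}h^2|\nabla\chi|^2\,dV_\Omega+\lambda_k\int_{\Omega}\chi^2\,dV_\Omega\bigr)}{|\partial\Omega|}.
\]

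It remains to insert the elementary bounds $\int_\Omega\chi\,dV_\Omega\ge|B(q,r/2)|$, $\int_\Omega\chi^2\,dV_\Omega\le|B(q,r)|$ and $\int_\Omega h^2|\nabla\chi|^2\,dV_\Omega\le\frac{4}{r^2}\int_{B(q,r)}h^2\,dV_\Omega$, to optimise the right-hand side over $a$ (the optimal value, $a=\lambda_k\int\chi/(\int h^2|\nabla\chi|^2+\lambda_k\int\chi^2)$, is $<1$ exactly when $\int_{B(q,r)}h^2$ is large), and to solve the resulting inequality for $\int_{B(q,r)}h^2\,dV_\Omega$; this produces the quantitative estimate claimed in the theorem. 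The qualitative conclusion follows immediately: a subdomain $D\subset\Omega$ contains a ball $B(q,r)$ with $\overline{B(q,r)}\subset D$, and since $\sigma_k(M_{h_\eps})<\lambda_k|\Omega|/|\partial\Omega|$ by \eqref{ineq:introgeneral} we have $\delta(h_\eps)\to 0^+$, so the right-hand side of the quantitative estimate for this fixed ball tends to $+\infty$, forcing $\int_D h_\eps^2\,dV_\Omega\ge\int_{B(q,r)}h_\eps^2\,dV_\Omega\to+\infty$.

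The two Rayleigh-quotient computations and the one-variable optimisation are routine. The step I expect to be the main obstacle is the bookkeeping across fiber modes: one must ensure that the sharpened trial function controls $\sigma_k(M_h)$ itself and not merely the bottom eigenvalue of the $k$-th auxiliary problem. This is what dictates building the full $(k+1)$-dimensional trial space from all modes $\phi_0,\dots,\phi_k$ (keeping $f_j\equiv 1$ in the lower sectors and repeating $f_k$ in every sector where $\lambda_j=\lambda_k$), checking its admissibility, and exploiting the convex-combination structure of its Rayleigh quotient, together with the verification that the chosen $a$ keeps $R_k(1-a\chi)$ above the competing value $\lambda_{k-1}|\Omega|/|\partial\Omega|$ (or the remark that otherwise the bound is even better).
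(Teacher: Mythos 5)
Your proposal is built on the same core idea as the paper's proof: test the Rayleigh quotient with a function of the form $1-t f$ where $f$ is supported in the subdomain, expand the resulting quadratic in $t$, and optimise over $t$ (your $a$). This is exactly Proposition \ref{prop:stablebody}.

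Where you diverge is in the reduction step, and this is where a real gap appears. You rebuild, from scratch, a $(k+1)$-dimensional trial subspace $\mathrm{span}\{f_0\phi_0,\dots,f_k\phi_k\}\subset W^{1,2}(M_h)$ and invoke the min--max on $M_h$, together with the (correct) observation that the Rayleigh quotient on this span is a convex combination of the sector quotients $R_j(f_j)$. This gives $\sigma_k(M_h)\leq\max_j R_j(f_j)$, which forces you to compare $R_k(1-a\chi)$ against the lower-mode values $\lambda_j|\Omega|/|\partial\Omega|$. Your parenthetical ``(and if it does not, the estimate below only improves)'' is not justified: in the regime where $\max_j R_j(f_j)=\lambda_{j^\ast}|\Omega|/|\partial\Omega|$ for some $\lambda_{j^\ast}<\lambda_k$, the only conclusion is $\delta(h)\geq(\lambda_k-\lambda_{j^\ast})|\Omega|/|\partial\Omega|$, which is a \emph{lower} bound on the deficit and does not by itself yield a lower bound on $\int_{B(q,r)}h^2$ --- the claimed right-hand side could be positive while nothing you have written constrains $\int_{B(q,r)}h^2$ from below. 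Likewise, your claim that the optimal $a$ is $<1$ ``exactly when $\int_{B(q,r)}h^2$ is large'' is not correct, since $\int_\Omega h^2|\nabla\chi|^2$ is controlled from above, not below, by $\int_{B(q,r)}h^2$ (the mass of $h^2$ could sit inside $B(q,r/2)$ where $\nabla\chi=0$); in any case the constraint $a<1$ is superfluous, since the Rayleigh quotient is well-defined for sign-changing trial functions and linear independence of the $f_j\phi_j$ does not require $f_j>0$.

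All of this is avoided by using the reduction already provided in the paper: inequality \eqref{ineq:kcompk0}, which is a direct consequence of the multiset decomposition \eqref{eq:separatedeigenvalues}, gives $\sigma_k(M_h)\leq\sigma_{\lambda_k,0}(h)$, and $\sigma_{\lambda_k,0}(h)=\min_a R_{\lambda_k,h}(a)\leq R_{\lambda_k,h}(1-t f)$ for every $t$. There is then no case analysis, no multiplicity bookkeeping, and no constraint on $t$. With that fix your argument reduces to the paper's; your only remaining difference is cosmetic (a smooth cutoff $\chi$ with $|\nabla\chi|\leq 2/r$ in place of the distance function $f(x)=d(x,\partial B(q,r))$ with $|df|=1$), which changes constants slightly but yields an estimate of the same form.
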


\begin{rem}
    In view of Theorem~\ref{thm:Stability} it would be interesting to investigate whether similar quantitative stability improvements are available for fibers $\Sigma$ of arbitrary dimension $n$. 
\end{rem}

For $h=1$ on $\partial\Omega$, inequality~\eqref{ineq:introgeneral} provides an upper bound in terms of the $L^{n-2}$-norm of $h$ while the hypothesis $h\leq C$ is an $L^\infty$-norm constraint. The H\"older inequality leads to the following immediate corollary.

\begin{coro}\label{coro: integralbound}
Let $M_h=\Omega\times_h\Sigma$ be as above, with fiber $\Sigma$ of dimension $n\geq 3$. Suppose that $h=1$ on $\partial \Omega$.
If $1\leq n-2\leq p<\infty$ then, for each index $k\geq 1$,
\begin{equation} \label{ineg: n ge 3}
\sigma_k(M_h) < \frac{\lambda_k \vert \Omega \vert^{\frac{p-(n-2)}{p}}}{|\partial\Omega|}\|h\|_{L^p}^{n-2}.
\end{equation}       
\end{coro}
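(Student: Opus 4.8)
The plan is to obtain \eqref{ineg: n ge 3} directly from inequality~\eqref{ineq:introgeneral} of Theorem~\ref{thm:MainIntro}(i), using the hypothesis $h\equiv 1$ on $\partial\Omega$ together with a single application of Hölder's inequality; no further analytic input is needed.

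\emph{Step 1: simplify the denominator.} Since $h=1$ on $\partial\Omega$, we have $\int_{\partial\Omega}h^n\,dA_{\partial\Omega}=|\partial\Omega|$, so \eqref{ineq:introgeneral} becomes
$$\sigma_k(M_h)<\frac{\lambda_k}{|\partial\Omega|}\int_{\Omega}h^{n-2}\,dV_\Omega.$$

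\emph{Step 2: Hölder's inequality.} Since $1\leq n-2\leq p$, the exponents $\tfrac{p}{n-2}$ and $\tfrac{p}{p-(n-2)}$ are both $\geq 1$ and conjugate, as $\tfrac{n-2}{p}+\tfrac{p-(n-2)}{p}=1$. Writing $h^{n-2}=h^{n-2}\cdot 1$ and applying Hölder on $\Omega$ gives
$$\int_{\Omega}h^{n-2}\,dV_\Omega\leq\Big(\int_{\Omega}h^{p}\,dV_\Omega\Big)^{\frac{n-2}{p}}\,|\Omega|^{\frac{p-(n-2)}{p}}=\|h\|_{L^p}^{n-2}\,|\Omega|^{\frac{p-(n-2)}{p}}.$$
Substituting into the inequality of Step 1 yields \eqref{ineg: n ge 3}, with the strict inequality inherited from \eqref{ineq:introgeneral}.

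\emph{On the main obstacle.} There is essentially none: the entire content of the corollary is carried by Theorem~\ref{thm:MainIntro}(i), and the present argument merely converts the $L^{n-2}$-constraint appearing there into an $L^p$-constraint. The only point deserving a word of care is the borderline case $p=n-2$ (which occurs here only when $n\geq 3$): then the second conjugate exponent degenerates to $+\infty$, Hölder becomes the trivial identity $\int_\Omega h^{n-2}=\|h\|_{L^{n-2}}^{n-2}$, the factor $|\Omega|^{\frac{p-(n-2)}{p}}$ equals $|\Omega|^0=1$, and \eqref{ineg: n ge 3} still holds—and remains strict because \eqref{ineq:introgeneral} already is.
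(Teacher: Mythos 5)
Your proof is correct and follows exactly the same route as the paper's: specialize \eqref{ineq:introgeneral} to $h\equiv 1$ on $\partial\Omega$, then apply Hölder's inequality with conjugate exponents $\tfrac{p}{n-2}$ and $\tfrac{p}{p-(n-2)}$. The remark on the borderline case $p=n-2$ is a sound extra sanity check but adds nothing beyond what the paper already implicitly covers.
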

\begin{rem}
    One advantage of the $L^p$ bound is that it could remain bounded even if the $L^\infty$-norm blows up to infinity.
\end{rem}
\begin{rem}
Because $h=1$ on $\partial\Omega$, for $p=n-2$, we see that inequality \eqref{ineg: n ge 3} is a special case of inequality \eqref{ineq:introgeneral}. On the other hand, letting $p\to\infty$ in \eqref{ineg: n ge 3} leads to 
$$\sigma_k(M_h) \leq \frac{\lambda_k \vert \Omega \vert}{|\partial\Omega|}\|h\|_{L^\infty}^{n-2},$$
which is the non-strict version of inequality \eqref{intro:doubleinequality}.
\end{rem}
This naturally raises the question to know what happens for $1\leq p< n-2$.
\begin{theorem}\label{thm:3dimconnected}
Let $M_h=\Omega\times_h\Sigma$ be as above, with fiber $\Sigma$ of dimension $n\geq 3$. Suppose that $h$ takes the value $1$ on $\partial \Omega$.
If $1\leq p<n-2$ and if $\partial \Omega$ is connected, then for each positive constant $C>0$,
\begin{equation}  \label{case large n}
\sup\{\sigma_k(M_h) : \int_{\Omega}h^pdV_{\Omega}\leq C; h=1\ \text{on}\ \partial \Omega \}=+\infty.
\end{equation}
\end{theorem}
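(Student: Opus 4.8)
The plan is to show that for every $C>0$ and every $\Lambda>0$ there is a smooth warping function $h$ with $h\equiv 1$ on $\partial\Omega$, $\int_\Omega h^p\,dV_\Omega\le C$, and $\sigma_1(M_h)\ge\Lambda$; since $\sigma_k(M_h)\ge\sigma_1(M_h)$ for every $k\ge 1$, this gives \eqref{case large n}. By the separation of variables recalled after Theorem~\ref{thm:MainIntro}, the Steklov spectrum of $M_h$ is the union over $j\ge 0$ of the spectra $\sigma_0^{(j)}\le\sigma_1^{(j)}\le\cdots$ of the Steklov-type problems on $\Omega$ with quadratic form $Q_j(f)=\int_\Omega\bigl(|\nabla f|^2+\lambda_j h^{-2}f^2\bigr)h^n\,dV_\Omega$ and boundary norm $\int_{\partial\Omega}f^2\,dA_{\partial\Omega}$ (using $h\equiv 1$ on $\partial\Omega$). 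Since $Q_j(f)$ is nondecreasing in $j$, the two smallest elements of this union are $\sigma_0^{(0)}=0$ and $\min\{\sigma_1^{(0)},\sigma_0^{(1)}\}$, so $\sigma_1(M_h)=\min\{\sigma_1^{(0)},\sigma_0^{(1)}\}$ and it suffices to force both $\sigma_1^{(0)}\ge\Lambda$ and $\sigma_0^{(1)}\ge\Lambda$.

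The warping function will be a thin, tall ``barrier'' around $\partial\Omega$. Set $c_0:=\min\bigl\{1,(C/(2|\Omega|))^{1/p}\bigr\}$, so $c_0^p|\Omega|\le C/2$, and for $L\ge 1$ put $b_L:=L^{-(p+n-2)/2}$. Let $s=d(\,\cdot\,,\partial\Omega)$, which is smooth on a fixed collar of $\partial\Omega$, and let $h=h_L$ depend only on $s$ there: $h_L(0)=1$; $h_L$ increases from $1$ to $L$ on $[0,b_L]$; $h_L\equiv L$ on $[b_L,2b_L]$; $h_L$ decreases from $L$ to $c_0$ on $[2b_L,3b_L]$; and $h_L\equiv c_0$ on $\{s\ge 3b_L\}$. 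Then
\[
\int_\Omega h_L^p\,dV_\Omega\ \le\ 3|\partial\Omega|\,(1+o(1))\,L^p b_L+c_0^p|\Omega|\ =\ 3|\partial\Omega|\,(1+o(1))\,L^{-(n-2-p)/2}+c_0^p|\Omega|,
\]
and the first term tends to $0$ \emph{precisely because} $p<n-2$, so $\int_\Omega h_L^p\le C$ once $L$ is large. For large $L$ the induced metric on $\{s\le 3b_L\}$ is within a factor $2$ of the product $ds^2+g_{\partial\Omega}$, and I absorb such factors into constants below.

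Consider first $\sigma_0^{(1)}$. Let $f$ be a trial function, $\theta:=\int_{\partial\Omega}f^2$, and $m(s):=\|f(\cdot,s)\|_{L^2(\partial\Omega)}$, so $m(0)^2=\theta$ and $\int_0^{2b_L}(m')^2\le\int_{\{s\le 2b_L\}}(\partial_s f)^2 h^n\le Q_1(f)$ since $h\ge 1$ on $\{s\le 2b_L\}$. If $Q_1(f)<\theta/(8b_L)$, then $|m(s)-m(0)|\le\sqrt{2b_L\,Q_1(f)}<\tfrac12\sqrt\theta$ on $[0,2b_L]$, whence $m(s)^2>\theta/4$ there; as $h\equiv L$ on $[b_L,2b_L]$ this yields $Q_1(f)\ge\lambda_1\int_{\{b_L\le s\le 2b_L\}}f^2 h^{n-2}\ge\tfrac14\lambda_1 L^{n-2}b_L\,\theta$. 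In all cases $\sigma_0^{(1)}\ge\min\{(8b_L)^{-1},\tfrac14\lambda_1 L^{n-2}b_L\}\to\infty$, because $b_L\to 0$ and $L^{n-2}b_L=L^{(n-2-p)/2}\to\infty$. Now $\sigma_1^{(0)}$, where the connectedness of $\partial\Omega$ enters. Let $f$ satisfy $\int_{\partial\Omega}f=0$, set $E:=\int_\Omega|\nabla f|^2 h^n$, and let $A$ be the barrier slab $\{b_L\le s\le 2b_L\}$, on which $h\equiv L$ and whose first nonzero Neumann eigenvalue equals $\mu_1(\partial\Omega)>0$ for $L$ large. The Poincaré inequality on $A$ (with mean $\bar f$) together with $h\equiv L$ on $A$ give $\|f-\bar f\|_{L^2(A)}^2\le\mu_1(\partial\Omega)^{-1}L^{-n}E$; picking $s_1\in[b_L,2b_L]$ with $\|f(\cdot,s_1)-\bar f\|_{L^2(\partial\Omega)}^2\le b_L^{-1}\|f-\bar f\|_{L^2(A)}^2$ and using Cauchy--Schwarz in $s$ to get $\|f(\cdot,0)-f(\cdot,s_1)\|_{L^2(\partial\Omega)}^2\le 2b_L E$, the mean-zero condition (so $\|f(\cdot,0)\|_{L^2(\partial\Omega)}\le\|f(\cdot,0)-\bar f\|_{L^2(\partial\Omega)}$) yields $\theta\le\bigl(4b_L+2\mu_1(\partial\Omega)^{-1}(L^n b_L)^{-1}\bigr)E$. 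Hence $\sigma_1^{(0)}\ge\bigl(4b_L+2\mu_1(\partial\Omega)^{-1}(L^n b_L)^{-1}\bigr)^{-1}\to\infty$, since $b_L\to 0$ and $L^n b_L=L^{(n+2-p)/2}\to\infty$. Choosing $L$ so large that both quantities exceed $\Lambda$ finishes the argument.

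Two remarks on where the hypotheses are used. The barrier contributes $\sim L^p b_L$ to $\int_\Omega h^p$ and $\sim L^{n-2}b_L$, $\sim L^n b_L$ to the two eigenvalue lower bounds, so one needs $L^{-(n-2)}\ll b_L\lesssim L^{-p}$, a nonempty range of exponents \emph{exactly when} $p<n-2$; and the Poincaré constant of the thinning slab $A=\partial\Omega\times[b_L,2b_L]$ stays bounded as $b_L\to 0$ only when $\mu_1(\partial\Omega)>0$, i.e. when $\partial\Omega$ is connected. I expect the main obstacle to be the lower bound on $\sigma_1^{(0)}$: one must (i) keep the Neumann/Poincaré constant of the slab $A$ uniformly bounded as it thins — the point where connectedness is indispensable — and (ii) transport the resulting near-constancy of $f$ on $A$ back to a genuine bound on the trace of $f$ on $\partial\Omega$, across the inner layer $\{s\le b_L\}$, while keeping track of the (harmless) deviation of the collar metric from the product metric.
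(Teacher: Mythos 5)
Your proof is correct and follows the same overall strategy as the paper: concentrate the warping function in a thin high ``collar barrier'' near $\partial\Omega$, use separation of variables to reduce to $\sigma_1(M_h)=\min\{\sigma_{0,1}(h),\sigma_{\lambda_1,0}(h)\}$, handle $\sigma_{\lambda_1,0}$ by the same dichotomy (either the trace norm persists across the barrier, giving a large $\lambda_1 f^2 h^{n-2}$ term, or it drops, giving a large $|\nabla f|^2 h^n$ term), and use the Poincar\'e inequality on $\partial\Omega$ for $\sigma_{0,1}$, which is exactly where connectedness enters. The paper's construction is parametrised slightly differently (a ring of width $\eps$ and height $(C/(2\eps|\partial\Omega|))^{1/p}$, with $h\ge D>0$ on the whole collar), but this is equivalent to your width $b_L$, height $L$ choice after reparametrisation, and both exploit the nonempty exponent window $L^{-(n-2)}\ll b_L\lesssim L^{-p}$ available precisely when $p<n-2$. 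The genuine divergence is in the $\sigma_{0,1}$ step: the paper runs a three-way case analysis (large energy on $[\eps,2\eps]$; large energy on $[0,\eps]$; both small) and, in the last case, applies Lemmas~\ref{lemma:deBruno} and~\ref{lemma:deBrunobis} slice by slice to show each $t$-slice $a(\cdot,t)$ is nearly $L^2$-normalised and nearly mean-zero, then integrates the Poincar\'e inequality over $t$. You instead apply the Neumann--Poincar\'e inequality once on the whole slab $A=\partial\Omega\times[b_L,2b_L]$, using that its first nonzero Neumann eigenvalue stabilises at $\mu_1(\partial\Omega)$ as the slab thins, then transport the resulting bound on $\|f-\bar f\|_{L^2(A)}$ to the boundary trace via Cauchy--Schwarz in the normal variable together with the mean-zero constraint $\|f(\cdot,0)\|\le\|f(\cdot,0)-\bar f\|$. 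This avoids the case split and the slice-wise almost-orthogonality lemma, giving a cleaner quantitative estimate $\theta\le(4b_L+2\mu_1(\partial\Omega)^{-1}(L^nb_L)^{-1})E$; it is a minor but real simplification of the paper's argument. Your informal treatment of the collar metric (``within a factor $2$ of the product'') plays the role of Lemma~\ref{lemma:quasi-iso} and is adequate here since only divergence, not sharp constants, is at stake.
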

In our proof of Theorem \ref{thm:3dimconnected}, the connectedness of the boundary plays an important role. We believe the supremum \eqref{case large n} is still infinite for general boundary, if the dimension $d$ of the base $\Omega$ is at least 2. 

In contrast, if the base has dimension $d=1$, then the corresponding supremum is bounded for all $p\geq 1$, even though the boundary $\partial\Omega=\{0,L\}$ of an interval $\Omega=[0,L]$ is not connected.
\begin{theorem}\label{dim basis 1}
Let $M_h=[0,L]\times_h\Sigma$, with fiber $\Sigma$ of dimension $n\geq 2$. Let $p\ge 1$. 
Suppose that $h(0)=h(L)=1$.
Then, for $k \geq 1$,
\begin{equation}\label{ineq:1dkgen}
    \sigma_k(M_h) \leq \frac{3^{n-2}}{4}\|h\|_{L^p}^{n-2}L^{\frac{p-(n-2)}{p}}\lambda_k.
\end{equation}
Moreover, for $k=1$ we also have
\begin{equation}\label{ineq:1dk1}
\sigma_1(M_h)\le \frac{4 \cdot 3^n \|h\|_{L^p}^{n}}{L^{\frac{n+p}{p}}}.
\end{equation}
\end{theorem}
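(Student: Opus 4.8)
The plan is to use separation of variables, exactly as in the proof of Theorem \ref{thm:MainIntro}: the Steklov spectrum of $M_h=[0,L]\times_h\Sigma$ decomposes as the union over $j\geq 0$ of the spectra of the one-dimensional auxiliary Steklov problems associated to the operators $L_j u = -u'' - \frac{n h'}{h} u' + \lambda_j h^{-2} u$ on $[0,L]$, with Steklov-type boundary conditions at $\{0,L\}$. Since $h(0)=h(L)=1$, these boundary conditions are the standard ones $u'(0)=-\sigma u(0)$, $u'(L)=\sigma u(L)$. The associated Rayleigh quotient for the block indexed by $j$ is $R_j[u]=\dfrac{\int_0^L (u')^2 h^n\,dt + \lambda_j\int_0^L u^2 h^{n-2}\,dt}{u(0)^2+u(L)^2}$, where the weight $h^n$ comes from the volume density and $h^{n-2}$ from $\lambda_j h^{-2}$ against that density. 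To bound $\sigma_k(M_h)$ it suffices to exhibit a $(k+1)$-dimensional subspace of trial functions on which $R_j$ (for the appropriate $j\leq k$) is small; taking trial functions depending only on $t\in[0,L]$ and choosing $j$ so that $\lambda_j\leq\lambda_k$ (e.g. any $j\leq k$) reduces everything to a one-variable problem with weight $h$.

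For \eqref{ineq:1dkgen}, the idea is to build $k+1$ disjointly supported trial functions. First I would partition (a portion of) $[0,L]$ into $k+1$ subintervals; on each one, take a function that is supported there, equals some constant on a sub-subinterval, and interpolates linearly to $0$. Actually, since the Steklov denominator only sees the endpoints $0$ and $L$, a cleaner route is: use the two functions living near the endpoints to handle $\sigma_1$, and for higher eigenvalues stack bump functions in the interior whose Dirichlet energy controls $R_j$. The constant $3^{n-2}/4$ and the exponent structure strongly suggest the following: normalize $h$ via Hölder, $\int (u')^2 h^n \leq \|h\|_{L^\infty}^n \int (u')^2$ is too crude, so instead one should cut $[0,L]$ into pieces on which $h$ is comparable to a constant (dyadic in the size of $h$), apply Hölder $\int h^{n-2} \leq \|h\|_{L^p}^{n-2} L^{(p-(n-2))/p}$ on the mass term, and control the gradient term by choosing the trial functions to be constant exactly where $h$ is large. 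The factor $3^{n-2}$ is the kind of constant that appears from a three-fold overlap or from a ratio $(1+\tfrac12)^{?}$ in a telescoping/covering argument; getting it (rather than a worse constant) will require care in the geometric construction, and that is the main obstacle — the separation-of-variables reduction and the Hölder step are routine, but packaging the trial functions so that the gradient term is genuinely absorbed into $\|h\|_{L^p}^{n-2}L^{(p-(n-2))/p}$ with the stated constant is delicate.

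For the $k=1$ refinement \eqref{ineq:1dk1}, the plan is to use a single non-constant trial function $u$ on $[0,L]$, orthogonal to constants in the appropriate sense (the first block $j=0$ gives $\lambda_0=0$, so $R_0[u]=\int (u')^2 h^n / (u(0)^2+u(L)^2)$, and $\sigma_1(M_h)$ is the minimum over $u$ not proportional to a constant — but here the competing eigenfunctions also come from the $j=1$ block, so one must check which block realizes $\sigma_1$; in any case an upper bound for $\sigma_1$ follows from a single good trial function in the $j=0$ block, namely any non-constant $u$). Choosing $u$ affine, $u(t)=t-L/2$, gives numerator $\int_0^L h^n\,dt$ and denominator $2(L/2)^2=L^2/2$, hence $\sigma_1 \leq 2\|h\|_{L^n}^n/L^2$ when $p=n$; for general $p$ one again splits into regions where $h$ is large versus small and applies Hölder, and the extra powers of $L$ in the denominator $L^{(n+p)/p}$ together with the constant $4\cdot 3^n$ come from this splitting. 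The obstacle here is minor: confirming that $\sigma_1(M_h)$ is controlled by the $j=0$ block (or, if the minimizing block is $j=1$, that the same trial function plus the $\lambda_1 h^{-2}$ term still obeys the bound), and then optimizing the affine (or piecewise-affine) trial function against the $L^p$-constrained weight to land on the stated constant.
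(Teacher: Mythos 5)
Your separation-of-variables reduction is correct, and you rightly observe that an upper bound for $\sigma_1$ can come from one good non-constant trial function in the $\lambda=0$ block. Two things go wrong afterwards. First, the plan to build $(k+1)$ disjointly supported trial functions, and in particular to ``stack bump functions in the interior,'' is a non-starter for a Steklov problem: a one-variable trial function $a$ supported away from $\{0,L\}$ has $a(0)=a(L)=0$, hence zero boundary norm and an infinite Rayleigh quotient. The paper avoids the issue entirely: by \eqref{ineq:kcompk0}, $\sigma_k(M_h)\le\sigma_{\lambda_k,0}(h)$, so the general-$k$ bound \eqref{ineq:1dkgen} follows from the \emph{single} constant trial function $a\equiv 1$, which gives $\sigma_k(M_h)\le\frac{\lambda_k}{2}\int_0^L h^{n-2}\,dt$; what remains is only an estimate of $\int_0^L h^{n-2}$.

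Second --- and this is exactly the gap you flagged --- the mechanism behind the constants $3^{n-2}$ and $4\cdot3^n$ is not a dyadic/telescoping cover but a uniform-continuity pigeonhole argument. Set $C=\|h\|_{L^p}^p$. Partition $[0,L]$ into $N$ equal subintervals so fine (by uniform continuity of $h$) that $h$ varies by at most $(C/L)^{1/p}$ across each. If a subinterval contains a point where $h\ge 3(C/L)^{1/p}$, then $h\ge 2(C/L)^{1/p}$ throughout it, so (using $p\ge1$) it contributes at least $2C/N$ to $\int_0^L h^p\le C$; hence at most $N/2$ subintervals are of this ``bad'' type, and at least $N/2$ of them, of total length $\ge L/2$, satisfy $h\le 3(C/L)^{1/p}$ everywhere. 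For \eqref{ineq:1dk1} the trial function in the $\lambda=0$ block is then not your affine $u(t)=t-L/2$ (which only produces an $L^n$-norm bound) but a continuous piecewise-linear function with $a(0)=-1$, $a(L)=1$, slope $4/L$ on the good subintervals and locally constant on the bad ones; since $h\le 3(C/L)^{1/p}$ wherever $a'\ne0$, one gets $\int_0^L (a')^2 h^n\le \frac{16}{L^2}\cdot\frac{L}{2}\cdot\bigl(3(C/L)^{1/p}\bigr)^n$, and dividing by the boundary norm $a(0)^2+a(L)^2=2$ yields precisely $\frac{4\cdot3^n\|h\|_{L^p}^n}{L^{(n+p)/p}}$. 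The reduction via \eqref{ineq:kcompk0} and this covering-by-good-intervals step are the two concrete ingredients your plan is missing.
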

\begin{rem}
By introducing the normalised measure $d\tau=dt/L$, inequality
\eqref{ineq:1dkgen} can be restated as follows:
\begin{equation}
    \sigma_k(M_h) \leq \frac{3^{n-2}}{4}\|h\|_{L^p(d\tau)}^{n-2}L\lambda_k.
\end{equation}
While for $k=1$ inequality \eqref{ineq:1dk1} becomes
\begin{equation}
\sigma_1(M_h)\le \frac{4 \cdot 3^n \|h\|_{L^p(d\tau)}^{n}}{L}.
\end{equation}
These inequalities can be directly applied to constant warping function $h=C$, leading to 
$$\lim_{L\to 0}\sigma_k(M_C)=0\qquad \text{ for each } k$$
as well as 
$$\lim_{L\to\infty}\sigma_1(M_C)=0.$$
This captures the familiar situation for cylinders (see, for example, \cite[Example 2.2]{CoGiGoSh2024}.)
\end{rem}

\subsection{One-dimensional fibers}\label{sec:dicussion}
Despite our best efforts, we were not able to solve our initial problem for warped products with fibers $\Sigma$ of dimension $n=1$. In that case, inequality \eqref{ineq:introgeneral} is still true, and when $h=1$ on $\partial\Omega$ it reads as follows:
\begin{equation*}
\sigma_{k}(M_h)<
\lambda_k\frac{\int_{\Omega}h^{-1}\,dV_\Omega}{|\partial\Omega|}.
\end{equation*}
The strategy that we have used to produce large eigenvalues when $n\geq 2$ is counterproductive when $n=1$. Indeed, for $n=1$, if $h\to +\infty$ in the interior of $\Omega$, then $\sigma_k(M_h)\to 0$.
The difficulty of the problem will become clear once we have presented the separation of variables for warped products and the min-max characterisation of the auxiliary separated problems. See the corresponding Rayleigh quotient \eqref{RayleighQuotient} and note that for $n=1$, the warping factor $h$ appears with both positive and negative exponents.

\begin{rem} Even in the situation when both $d=1$ and $n=1$, the problem is quite complicated. Consider the cylinder $[0,L]\times S^1$ with the warped metric $dr^2+h(r)^2d\theta^2$. The unknown change of variable $r=r(t)$ leads to the metric $r'(t)^2dt^2+h(r)^2d\theta^2$. In order to obtain a metric that is conformal to the product metric $dr^2+d\theta^2$ we solve $r'(t)=h(r)$. For $r\in[0,L]$, let
    $$t(r):=\int_0^r\frac{1}{h(s)}\,ds.$$
    The change of variable $t\mapsto r(t)$ is the inverse of that function.
    It follows that $[0,L]\times_h S^1$ is isometric to $[0,t(L)]\times S^1$ equipped with the conformal Riemannian metric $h(r(t))^2(dt^2+d\theta^2)$. Because the conformal factor restricts to $h=1$ on the boundary, it follows from conformal invariance that 
    $$\sigma_k([0,L]\times_hS^1)=\sigma_k([0,t(L)]\times S^1)\qquad\text{ for each } k.$$
    Hence, the supremum of $\sigma_k([0,L]\times_h S^1)$ among all warping functions $h$ such that $h(0)=h(L)=1$ is equal to
    $$\sup\{\sigma_k([0,T]\times S^1)\,:\,T>0\}.$$
    This last problem has been studied by Fan, Tam and Yu in their paper~\cite{FaTaYu2015}. In particular, Theorem 1.1 of their paper gives a complete answer to this maximisation problem. The interested reader is also invited to look at~\cite[Section 5.7]{CoGiGoSh2024}.
\end{rem}

\section*{Plan of the paper}
In Section \ref{sec:prelim} we lay the foundations for the proofs of the main theorems by recalling the separation of variables on warped products, investigating the spectrum of the particular case $M_C$, presenting some technical lemmas, and recalling some facts about quasi-isometries. We defer the proofs of Theorems \ref{thm:MainIntro}, \ref{thm:Stability}, \ref{thm:3dimconnected}, \ref{dim basis 1} and Corollary \ref{coro: integralbound} to Section \ref{proof: theorems}.

\section*{Acknowledgements}
J.B and B.C acknowledge support of the SNSF project \emph{``Geometric Spectral Theory''}, grant number 200021-19689.
K.G. acknowledges support from a Royal Society Research Grant entitled \emph{``Upper bounds for Steklov eigenvalues, ratios and gaps''}. A.G. acknowledges the support of the Natural Sciences and Engineering Research Council of Canada (NSERC), in particular through the discovery grant \emph{``Isoperimetry and spectral geometry''} (RGPIN-2022-04247) as well as support of Fonds de recherche du Québec (FRQ) through the team research project \emph{``G\'eom\'etrie des \'ecarts spectraux pour les probl\`emes de Laplace et de Steklov''}.

\section{Preliminary material}\label{sec:prelim}

In this section, we first recall the separation of variables on warped products and list some of its consequences.
We then introduce some technical lemmas and recall some facts about quasi-isometries that will be used in the proof of Theorem \ref{thm:MainIntro}. Finally, we obtain some properties of the spectrum for the example of $M_C = \Omega \times_{C} \Sigma$ which will also be useful in the proof of Theorem \ref{thm:MainIntro}.

\subsection{Separation of variables on warped products}\label{sec:sepvar}
The warped product structure allows the use of separation of variables. This is expressed in terms of auxiliary Steklov problems for the diffusion operator $L_{h}:C^\infty(\Omega)\to C^\infty(\Omega)$ on the base, which is defined by
$$L_{h}a=-\Delta a-\frac{n}{h}\langle \nabla h,\nabla a\rangle.$$
For each $\lambda\geq 0$, consider the following auxiliary Steklov problem:
\begin{gather}\label{problem:auxiliary}
\begin{cases}
    L_ha+\lambda h^{-2} a=0&\mbox{ in }\Omega,\\
    \partial_\nu a=\sigma a&\mbox{ on }\partial \Omega.
\end{cases}
\end{gather}
If the dimension of the base $\Omega$ is $d\geq 2$, then for each $\lambda\geq 0$, this problem has discrete unbounded spectrum:
\[0\leq\sigma_{\lambda,0}(h)\leq\sigma_{\lambda,1}(h)\leq\sigma_{\lambda,2}(h)\leq\ldots\nearrow+\infty.\]
Corresponding eigenfunctions $(a_{\lambda,l})_{l\geq 0  }$ form a complete orthogonal set 
that we normalise by $\int_{\partial\Omega}a_{\lambda,l}^2h^n\,dA=1$.
However, if the base is 1-dimensional, there are only two eigenvalues for each $\lambda$: 
$$\{\sigma_{\lambda,0}(h)\leq\sigma_{\lambda,1}(h)\}.$$

\begin{proposition}\label{prop:separation}
Let the eigenvalues of the Laplace operator on the fiber $\Sigma$ be $0=\lambda_0<\lambda_1\leq\lambda_2\leq\cdots\nearrow+\infty$.
The Steklov spectrum of $M_h=\Omega\times_h\Sigma$ is given by the multiset
\begin{gather}\label{eq:separatedeigenvalues}
\{\sigma_k(M_h)\}_{k\geq 0}=\{\sigma_{\lambda_j,l}(h)\,:\,j,l\geq 0\}.
\end{gather}    
\end{proposition}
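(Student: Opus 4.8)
The plan is to use separation of variables in the fiber direction, expanding any function on $M = \Omega \times \Sigma$ in a Hilbert basis of eigenfunctions of the Laplacian on the closed manifold $\Sigma$, and then to recognise that each Fourier mode contributes an instance of the auxiliary problem~\eqref{problem:auxiliary}. First I would fix an $L^2(\Sigma)$-orthonormal basis $(\varphi_j)_{j\geq 0}$ of eigenfunctions of $\Delta_{g_\Sigma}$ with $\Delta_{g_\Sigma}\varphi_j = \lambda_j \varphi_j$. Given a smooth function $u$ on $M_h$, write $u(x,q) = \sum_{j\geq 0} a_j(x)\varphi_j(q)$ with $a_j(x) = \int_\Sigma u(x,q)\varphi_j(q)\,dV_\Sigma(q)$. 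The next step is to compute the Laplace operator $\Delta_{g_h}$ in terms of the base and fiber data. For a warped product metric $g_h = g_\Omega + h^2 g_\Sigma$, one has the standard formula
$$\Delta_{g_h} u = \Delta_{g_\Omega} u + \frac{n}{h}\langle \nabla h, \nabla u\rangle_{g_\Omega} + \frac{1}{h^2}\Delta_{g_\Sigma} u,$$
where the gradient and the first Laplacian act in the $\Omega$ variable and $\Delta_{g_\Sigma}$ acts in the $\Sigma$ variable; this comes from $\det g_h = \det g_\Omega \cdot h^{2n}\det g_\Sigma$, so the volume form is $h^n\,dV_\Omega\,dV_\Sigma$, and a direct computation of $\frac{1}{\sqrt{\det g_h}}\partial_i(\sqrt{\det g_h}\,g_h^{ij}\partial_j u)$ separates into the three terms. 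Applying this to $u = \sum_j a_j\varphi_j$ and using $\Delta_{g_\Sigma}\varphi_j = \lambda_j\varphi_j$ gives, mode by mode,
$$\Delta_{g_h} u = \sum_{j\geq 0}\Big(\Delta_{g_\Omega} a_j + \frac{n}{h}\langle\nabla h,\nabla a_j\rangle - \frac{\lambda_j}{h^2}a_j\Big)\varphi_j = -\sum_{j\geq 0}\big(L_h a_j + \lambda_j h^{-2} a_j\big)\varphi_j.$$
Likewise, since the boundary of $M_h$ is $\partial\Omega\times\Sigma$ with outward normal purely in the $\Omega$ direction, $\partial_\nu u = \sum_j (\partial_\nu a_j)\varphi_j$. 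Hence $u$ is a Steklov eigenfunction of $M_h$ with eigenvalue $\sigma$ if and only if, for every $j$, the coefficient $a_j$ satisfies $L_h a_j + \lambda_j h^{-2} a_j = 0$ in $\Omega$ and $\partial_\nu a_j = \sigma a_j$ on $\partial\Omega$; that is, $a_j$ is either identically zero or an eigenfunction of the auxiliary problem~\eqref{problem:auxiliary} with parameter $\lambda_j$ and eigenvalue $\sigma$.

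From this correspondence the equality of multisets~\eqref{eq:separatedeigenvalues} follows by a bookkeeping argument. For each $j$, fix an orthogonal basis $(a_{\lambda_j,l})_{l\geq 0}$ of eigenfunctions of~\eqref{problem:auxiliary} with parameter $\lambda_j$ (interpreting this as a two-element family when $d=1$), normalised as in the statement. Then the functions $x\mapsto a_{\lambda_j,l}(x)\varphi_j(q)$, ranging over all $j,l\geq 0$, form a Hilbert basis of the relevant function space and are Steklov eigenfunctions of $M_h$ with eigenvalues $\sigma_{\lambda_j,l}(h)$; conversely, by the Fourier expansion above, every Steklov eigenfunction of $M_h$ is a (finite) linear combination of such products sharing the same Steklov eigenvalue. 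Since the $(\varphi_j)$ are orthonormal in $L^2(\Sigma)$ and the warped volume form factors as $h^n\,dV_\Omega\,dV_\Sigma$, orthogonality and the counting of multiplicities is preserved, giving the stated identification with multiplicity.

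The main obstacle, and the only point requiring genuine care rather than routine computation, is the functional-analytic justification that the Fourier-in-$\Sigma$ decomposition interacts correctly with the Steklov (Dirichlet-to-Neumann) eigenvalue problem: one must check that the product functions indeed span the space on which the Steklov problem is posed (e.g.\ $H^1(M_h)$, or equivalently that the relevant trace space on $\partial\Omega\times\Sigma$ decomposes compatibly), and that the multiplicities match — in particular that no Steklov eigenfunction of $M_h$ fails to be captured by this decomposition and that distinct pairs $(j,l)$ are not spuriously identified. This is where one invokes the compactness of $\Sigma$ (so that $(\varphi_j)$ is a genuine Hilbert basis with the spectral decomposition commuting with the base operators), the ellipticity and self-adjointness of~\eqref{problem:auxiliary} for each fixed $\lambda_j$, and the discreteness of the auxiliary spectra recalled just before the proposition; once these are in place, the identification of the two multisets is a formal consequence. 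I would also note explicitly the dichotomy $d\geq 2$ versus $d=1$ flagged in the text: for $d=1$ the base operator on an interval has only a two-dimensional solution space for each $\lambda_j$, so the index $l$ runs over $\{0,1\}$, and the union in~\eqref{eq:separatedeigenvalues} is taken accordingly.
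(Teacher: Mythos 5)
Your proposal is correct and takes essentially the same route as the paper, which also proceeds by separation of variables via products $a_{\lambda_j,l}(x)\phi_j(q)$ and then refers to \cite{GiPoly2024} for the functional-analytic details you flag; you have simply fleshed out the computation of the warped Laplacian that the paper leaves implicit. One minor slip: you state the sign convention $\Delta_{g_\Sigma}\varphi_j = \lambda_j\varphi_j$ but then compute as though $\Delta_{g_\Sigma}\varphi_j = -\lambda_j\varphi_j$, which is the convention actually consistent with the definition of $L_h$ and the Rayleigh quotient~\eqref{RayleighQuotient} — the argument goes through once this is made consistent.
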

The proof is standard: it follows by considering products of functions of the form 
$$u_{\lambda_j,l}(x,p)=a_{\lambda_j,l}(x)\phi_j(p),$$ 
where the functions $a_{j,l}\in C^\infty(\Omega)$ are eigenfunctions of the above auxiliary Steklov problem~\eqref{problem:auxiliary} for $\lambda=\lambda_j$ and where the functions $\phi_j\in C^\infty(\Sigma)$ are Laplace eigenfunctions
forming an orthonormal basis of $L^2(\Sigma)$. These functions form a complete set of Steklov eigenfunctions on $M_h$. For details, see~\cite{GiPoly2024}. 
Of particular interest to us is that the eigenvalues $\sigma_{\lambda,l}$ admit familiar variational characterisations in terms of the following Rayleigh quotient for $a\in W^{1,2}(\Omega)$:
\begin{gather}\label{RayleighQuotient}
R_{\lambda,h}(a)=\frac{\int\limits_\Omega (|da|^2h^n+\lambda a^2h^{n-2})\,dV_\Omega}{\int\limits_{\partial\Omega}a^2h^n\,dA_{\partial\Omega}}\,.    
\end{gather}
We will make extensive use of the following characterisation of $\sigma_{\lambda,0}$:
\begin{gather}\label{eq:cara3}
    \sigma_{\lambda,0}(h)=\min\bigg\{R_{\lambda,h}(a):a\in W^{1,2}(\Omega)\setminus\{0\}\bigg\}=R_{\lambda,h}(a_{\lambda,0}),
\end{gather}
with equality if and only if $a$ is a corresponding eigenfunction.

Observe that for each $k\geq 0$,
\begin{equation}\label{ineq:kcompk0}
    \sigma_k(M_h)\leq \sigma_{\lambda_k,0}(h).
\end{equation}
This follows directly from the representation~\eqref{eq:separatedeigenvalues}, since the labelled eigenvalues $\sigma_k(M_h)$ are listed in increasing order by definition. 
By using the constant function $a\equiv 1$ as a trial function in~\eqref{eq:cara3}, this leads to
\begin{gather}\label{ineq:consttest}
\sigma_k(M_h)\leq \sigma_{\lambda_k,0}(h)\leq\lambda_k\frac{\int_{\Omega}h^{n-2}\,dV_\Omega}{\int_{\partial\Omega}h^n\,dA_{\partial\Omega}}.
\end{gather}

\noindent
For arbitrary $\lambda\geq 0$ and index $l\geq 1$, we will also use the characterisation
\begin{gather}
\sigma_{\lambda,l}(h)= \min_{E\subset W^{1,2}(\Omega), \dm (E)=l+1} \max\{R_{\lambda,h}(u):\ u \in E\}.
\end{gather}

 \subsection{Steklov spectrum of $M_C$} \label{Spectrum $M_C$} Recall that for $C>0$,
$$M_C=\Omega \times_C \Sigma = (\Omega,g_{\Omega})\times (\Sigma,C^2g_{\Sigma}).$$
In this section we will investigate the spectrum of $M_C$ when $C$ is large. This will be useful for the proof of Theorem \ref{thm:MainIntro}. In particular, we will obtain inequality \eqref{Case n>2} shortly.

For $h=C>0$, problem \eqref{problem:auxiliary} simplifies to
\begin{gather}\label{eq:sepwarpedconstant}
\begin{cases}
    -\Delta a+\frac{\lambda}{C^2}a=0&\mbox{ in }\Omega,\\
    \partial_\nu a=\sigma a&\mbox{ on }\partial \Omega.
\end{cases}
\end{gather}
This is the Steklov problem for the Helmholtz equation with energy $-\mu=-\lambda/C^2$.
The smallest eigenvalue of this equation is given by
$$\sigma_{\mu,0}=\min\left\{\int_{\Omega}\bigl(|\nabla f|^2+\mu f^2\bigr)\,dV_\Omega\,:\,f:\Omega\to\R, \int_{\partial\Omega}f^2\,dA_{\partial\Omega}=1\right\}.$$
Because $\sigma_{\mu,0}$ is the smallest eigenvalue, it is simple. Let $f_\mu$ be the corresponding eigenfunction. Then we can compute the derivative with respect to $\mu$,
$$\sigma_{\mu,0}'=2\int_{\Omega}\bigl(\nabla f_\mu\cdot \nabla f_\mu'+\mu f_\mu f_\mu'\bigr)\,dV_\Omega+\int_\Omega f_\mu^2\,dV_\Omega.$$
We claim that the first integral on the right-hand side vanishes. Indeed, the function $f_\mu$ satisfies the weak equation corresponding to \eqref{eq:sepwarpedconstant}:
$$\int_\Omega \bigl(\nabla f_\mu\cdot \nabla v+\mu f_\mu v\bigr)\,dV_\Omega=\sigma\int_{\partial\Omega}f_\mu v\,dA_{\partial\Omega}\qquad \forall v\in C^\infty(\Omega).$$
Using $v=f_\mu'$ as a test function gives
$$\int_\Omega \bigl(\nabla f_\mu \cdot \nabla f_\mu'+\mu f_\mu f_\mu'\bigr)\,dV_\Omega=\sigma\int_{\partial\Omega}f_\mu f_\mu'\,dA_{\partial\Omega}.$$
Now, differentiating the normalisation $\int_{\partial\Omega}f_\mu^2=1$ with respect to $\mu$ gives $\int_{\partial\Omega}f_\mu f_\mu'\,dA_{\partial\Omega}=0$, proving the claim.
It follows that
$$\sigma_{\mu,0}'=\int_\Omega f_\mu^2\,dV_\Omega=\frac{\int_\Omega f_\mu^2\,dV_\Omega}{\int_{\partial\Omega} f_\mu^2\,dA_\Omega}.$$
In particular, if we compute the derivative at $\mu=0$, using that $f_0$ is constant we get
$$\frac{d}{d\mu}\sigma_{\mu,0}\Bigl|\Bigr._{\mu=0}=\frac{|\Omega|}{|\partial\Omega|}.$$
Now in order to prove \eqref{Case n>2} of Theorem \ref{thm:MainIntro}, we compute:
\begin{gather*}
    \lim_{C\to\infty}C^2\sigma_{\lambda_k/C^2,0}
    =\lim_{C\to\infty}\lambda_k\frac{\sigma_{\lambda_k/C^2,0}-\sigma_{0,0}}{\lambda_k/C^2}
    =\lambda_k\sigma_{0,0}'
    =\lambda_k|\Omega|/|\partial\Omega|.
\end{gather*}
Finally, observe that for  $\lambda=\lambda_k$ with $k\ge 1$, taking the constant function $a=1$ as a trial function leads to
\begin{equation} \label{ineg: C}
    \sigma_{\lambda_k,0}(C) \le \frac{\lambda_k \vert \Omega\vert}{C^2\vert \partial \Omega \vert}.
\end{equation}
\noindent
On the other hand, $\sigma_{\lambda_k,1}(C)\ge \sigma_{0,1}(C)=\sigma_1(\Omega).$
We conclude that, if $C$ is large enough then the first $k+1$ eigenvalues of $M_C$ are precisely $\sigma_{\lambda_j,0}(M_C)$, $j=0,1,\dots,k$.
In particular, 
    $\lim_{C\to\infty}C^2\sigma_{k}(M_C)=\lim_{C\to\infty}C^2\sigma_{\lambda_k/C^2,0}$, which concludes the proof of 
    \eqref{Case n>2} of Theorem \ref{thm:MainIntro}.

\subsection{Almost orthogonality}\label{sec:teclem}
Let $N$ be a closed Riemannian manifold. Given a smooth function $f\in C^\infty([0,\eps]\times N)$, for each $t\in [0,\eps]$ we define 
$$f_t:=f|_{\{t\}\times N}.$$ 
 If a sequence of functions $(f_j)\in C^\infty([0,\eps]\times N)$ is such that $(f_j)_0$ form an orthonormal family in $L^2(N\times\{0\})$, then the following two lemmas will be used to show that $(f_j)_t$ will be almost orthonormal, with defect controlled in terms of the Dirichlet energy of the functions.
These results will be used in the proof of inequality \eqref{inegsharp: M_C} and also in the proof of Theorem \ref{thm:3dimconnected}.
\begin{lemma}\label{lemma:deBruno}
 Let $N$ be a closed Riemannian manifold. Let $f\in C^\infty([0,\eps]\times N)$. For $t\in [0,\eps]$,  define $f_t:=f|_{\{t\}\times N}$. The following holds for all $t\in [0,\eps]$:
 \begin{gather*}
   \bigl|\|f_\eps\|_{L^2(N)}-\|f_t\|_{L^2(N)}\bigr|^2
   \leq
   (\eps-t)\int_{[t,\eps]\times N}|d f|^2\,dV_N.
 \end{gather*}
\end{lemma}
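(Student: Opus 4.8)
The plan is to exploit the fundamental theorem of calculus in the $t$-variable, applied to the $L^2(N)$-norm of the slices $f_t$, and then to control the resulting $t$-derivative by a Cauchy--Schwarz argument in $t$ together with Cauchy--Schwarz in $L^2(N)$. First I would observe that, since $f$ is smooth on $[0,\eps]\times N$ and $N$ is closed, the function $t\mapsto \|f_t\|_{L^2(N)}^2 = \int_N f_t^2\,dV_N$ is smooth on $[0,\eps]$, with derivative $\frac{d}{dt}\|f_t\|_{L^2(N)}^2 = 2\int_N f_t\,\partial_t f_t\,dV_N$, where $\partial_t f$ denotes differentiation in the interval variable. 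From here, rather than differentiating the norm itself (which could vanish), the cleaner route is to write, for $t\le\eps$,
$$
\|f_\eps\|_{L^2(N)}^2 - \|f_t\|_{L^2(N)}^2 = \int_t^\eps \frac{d}{ds}\|f_s\|_{L^2(N)}^2\,ds = 2\int_{[t,\eps]\times N} f\,\partial_s f\,dV_N\,ds,
$$
and then factor the left-hand side as $\bigl(\|f_\eps\|_{L^2(N)}-\|f_t\|_{L^2(N)}\bigr)\bigl(\|f_\eps\|_{L^2(N)}+\|f_t\|_{L^2(N)}\bigr)$.

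Next I would bound the double integral on the right. Applying the Cauchy--Schwarz inequality on $[t,\eps]\times N$ gives
$$
\Bigl|2\int_{[t,\eps]\times N} f\,\partial_s f\,dV_N\,ds\Bigr| \le 2\Bigl(\int_{[t,\eps]\times N} f^2\,dV_N\,ds\Bigr)^{1/2}\Bigl(\int_{[t,\eps]\times N} (\partial_s f)^2\,dV_N\,ds\Bigr)^{1/2}.
$$
The first factor is $\bigl(\int_t^\eps \|f_s\|_{L^2(N)}^2\,ds\bigr)^{1/2}$. The key point is to compare $\|f_s\|_{L^2(N)}$ with $\|f_\eps\|_{L^2(N)}$ or $\|f_t\|_{L^2(N)}$: a slicker variant is to instead apply Cauchy--Schwarz with the constant function to get $\int_t^\eps\|f_s\|_{L^2(N)}\cdot 1\,ds \le (\eps-t)^{1/2}\bigl(\int_t^\eps\|f_s\|_{L^2(N)}^2\,ds\bigr)^{1/2}$, but that is the wrong direction. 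I expect the cleanest path is the following: use the triangle/reverse-triangle inequality in $L^2(N)$ to get, for every $s\in[t,\eps]$, that $\bigl|\|f_\eps\|_{L^2(N)}-\|f_s\|_{L^2(N)}\bigr|\le \bigl(\int_N (f_\eps-f_s)^2\,dV_N\bigr)^{1/2}$ and then bound $f_\eps(x)-f_s(x)=\int_s^\eps \partial_r f(r,x)\,dr$ pointwise by Cauchy--Schwarz in $r$; integrating the square over $N$ and over the relevant ranges, together with Tonelli, collapses everything to $(\eps-t)\int_{[t,\eps]\times N}(\partial_r f)^2\,dV_N\,dr$. Since $(\partial_r f)^2\le |df|^2$ (the full gradient on the product includes the $N$-directional derivatives), this yields exactly
$$
\bigl|\|f_\eps\|_{L^2(N)}-\|f_t\|_{L^2(N)}\bigr|^2 \le \int_N (f_\eps-f_t)^2\,dV_N \le (\eps-t)\int_{[t,\eps]\times N}|df|^2\,dV_N,
$$
which is the claim.

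The main obstacle, and the step requiring care, is the very first inequality $\bigl|\|f_\eps\|_{L^2(N)}-\|f_t\|_{L^2(N)}\bigr|\le \|f_\eps-f_t\|_{L^2(N)}$: this is just the reverse triangle inequality for the norm on $L^2(N)$, but one must be sure the comparison goes in the stated direction (it does, since $\bigl|\,\|a\|-\|b\|\,\bigr|\le\|a-b\|$ always). After that, the estimate is a routine application of Cauchy--Schwarz in the $r$-variable to the pointwise identity $f_\eps(x)-f_t(x)=\int_t^\eps\partial_r f(r,x)\,dr$, namely $(f_\eps(x)-f_t(x))^2\le (\eps-t)\int_t^\eps(\partial_r f(r,x))^2\,dr$, followed by integration over $x\in N$ and an application of Tonelli's theorem to rewrite the iterated integral as an integral over $[t,\eps]\times N$; the final step replaces $(\partial_r f)^2$ by the larger quantity $|df|^2$. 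No deep input is needed beyond smoothness of $f$ and compactness of $N$, both of which are in the hypotheses.
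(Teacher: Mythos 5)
Your proposal is correct and, once the abandoned first attempt (differentiating $\|f_t\|_{L^2(N)}^2$ and factoring) is stripped away, it is exactly the paper's argument: reverse triangle inequality to reduce to $\|f_\eps-f_t\|_{L^2(N)}$, the Fundamental Theorem of Calculus plus Cauchy--Schwarz in the interval variable to bound $(f_\eps(x)-f_t(x))^2$ pointwise by $(\eps-t)\int_t^\eps|\partial_s f|^2\,ds$, integration over $N$ with Tonelli, and finally $|\partial_s f|^2\leq|df|^2$. No gap; same route as the paper.
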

\begin{proof}
    It follows from the Fundamental Theorem of Calculus and the Cauchy--Schwarz inequality that
    \begin{align*}
        (f_\eps(x)-f_t(x))^2
        =
        \left(\int_t^\eps\frac{\partial f}{\partial s}(s,x)ds\right)^2
        \leq
        (\eps-t)\int_t^\eps\bigg|\frac{\partial f}{\partial s}(s,x)\bigg|^2ds.
    \end{align*}
    Integrating over $N$   
    leads to
    \begin{equation}\label{eq:lem2.1}
    \|f_\eps-f_t\|^2_{L^2(N)}
    \leq
    (\eps-t) \int_{[t,\eps]\times N}\bigg|\frac{\partial f}{\partial s}\bigg|^2
    \leq
    (\eps-t) \int_{[t,\eps]\times N} |df(s,x)|^2.
    \end{equation}
    The proof is completed by using the triangle inequality:
    $$\bigl|\|f_\eps\|_{L^2(N)}-\|f_t\|_{L^2(N)}\bigr|^2
     \leq
    (\eps-t)\int_{[t,\eps]\times N} |df(s,x)|^2.
    $$
\end{proof}

There is a comparable result for the product of two functions.

\begin{lemma}  \label{lemma:deBrunobis} 
Let $N$ be a closed Riemannian manifold.
Let $f,\tilde{f} \in C^\infty([0,\eps]\times N)$. 
For $t\in [0,\eps]$,  define $f_t:=f|_{\{t\}\times N}$ and $\tilde{f}_t:=\tilde{f}|_{\{t\}\times N}$.
Then for $t < \eps$,
\begin{align*}
&\bigg\vert \int_N f_{\eps}(x)\tilde{f}_{\eps}(x)dx-\int_N f_{t}(x)\tilde{f}_{t}(x)dx \bigg\vert  \\
&\qquad \qquad \le \vert \eps-t\vert^{1/2} \left(\left(\int_{[t,\eps]\times N} |df|^{2} \right)^{1/2}\left(\int_{N} \tilde{f}_{\eps}^{2}\right)^{1/2} \right. \nonumber\\
& \qquad \qquad \qquad \qquad \qquad \qquad \left. + \left(\int_{[t,\eps]\times N}|d\tilde{f}|^{2} \right)^{1/2}\left(\int_{N} f_{t}^{2}\right)^{1/2}  \right).
\end{align*}
\end{lemma}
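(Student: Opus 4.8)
The plan is to imitate the proof of Lemma~\ref{lemma:deBruno}, reducing the estimate for a product to two applications of the Cauchy--Schwarz inequality after a telescoping decomposition. First I would write, pointwise on $N$,
\[
f_\eps(x)\tilde f_\eps(x)-f_t(x)\tilde f_t(x)=\bigl(f_\eps(x)-f_t(x)\bigr)\tilde f_\eps(x)+f_t(x)\bigl(\tilde f_\eps(x)-\tilde f_t(x)\bigr),
\]
integrate this identity over $N$, and apply the triangle inequality to split the quantity on the left-hand side of the lemma into two pieces.

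For the first piece, Cauchy--Schwarz in $L^2(N)$ gives
\[
\Bigl|\int_N\bigl(f_\eps-f_t\bigr)\tilde f_\eps\Bigr|\le\|f_\eps-f_t\|_{L^2(N)}\,\|\tilde f_\eps\|_{L^2(N)},
\]
and for the second piece
\[
\Bigl|\int_N f_t\bigl(\tilde f_\eps-\tilde f_t\bigr)\Bigr|\le\|f_t\|_{L^2(N)}\,\|\tilde f_\eps-\tilde f_t\|_{L^2(N)}.
\]
It then remains to control $\|f_\eps-f_t\|_{L^2(N)}$ and $\|\tilde f_\eps-\tilde f_t\|_{L^2(N)}$, which is exactly what the intermediate bound~\eqref{eq:lem2.1} in the proof of Lemma~\ref{lemma:deBruno} provides: $\|f_\eps-f_t\|^2_{L^2(N)}\le(\eps-t)\int_{[t,\eps]\times N}|df|^2$, and likewise with $\tilde f$ in place of $f$. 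Substituting these two bounds, taking square roots, and collecting terms yields precisely the stated inequality.

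The argument is entirely routine and I do not expect a genuine obstacle. The only point requiring a little care is the choice of telescoping: one must keep $\tilde f_\eps$ (not $\tilde f_t$) as the ``no-derivative'' factor in the first term and $f_t$ (not $f_\eps$) in the second, so that the resulting estimate matches the asymmetric right-hand side in the statement. The decomposition displayed above is the one that does this; the alternative split $(f_\eps-f_t)\tilde f_t+f_\eps(\tilde f_\eps-\tilde f_t)$ would instead produce the variant with $\|\tilde f_t\|_{L^2(N)}$ and $\|f_\eps\|_{L^2(N)}$.
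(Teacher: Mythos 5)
Your proposal matches the paper's proof essentially line for line: the same telescoping identity $f_\eps\tilde f_\eps-f_t\tilde f_t=(f_\eps-f_t)\tilde f_\eps+f_t(\tilde f_\eps-\tilde f_t)$, the same two applications of Cauchy--Schwarz on $N$, and the same use of the intermediate bound from the proof of Lemma~\ref{lemma:deBruno} to control $\|f_\eps-f_t\|_{L^2(N)}$ and $\|\tilde f_\eps-\tilde f_t\|_{L^2(N)}$. Your remark about which factor to carry without a derivative is also the right observation; the argument is complete and correct.
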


\begin{proof} 
We have
$$
f_{\eps}(x)\tilde{f}_{\eps}(x)-f_t(x)\tilde{f}_t(x)=
(f_{\eps}(x)-f_t(x))\tilde{f}_{\eps}(x)+(\tilde{f}_{\eps}(x)-\tilde{f}_t(x))f_{t}(x).
$$
As in the proof of Lemma \ref{lemma:deBruno}, we see that
\begin{align*}
&\vert f_\eps(x)-f_t(x) \vert
         \leq
        \vert \eps-t \vert^{1/2}\left(\int_t^\eps\bigg|\frac{\partial f}{\partial s}(s,x)\bigg|^2ds\right)^{1/2},\\
&\vert \tilde{f}_\eps(x)-\tilde{f}_t(x) \vert
         \leq
        \vert \eps-t \vert^{1/2}\left(\int_t^\eps\bigg|\frac{\partial \tilde{f}}{\partial s}(s,x)\bigg|^2ds\right)^{1/2}.
\end{align*}

Integrating over $N$, we obtain
\begin{align*}
&\int_N \vert (f_{\eps}(x)-f_t(x))\tilde{f}_{\eps}(x)\vert\\
& \qquad \qquad \le \left(\int_N \vert f_{\eps}(x)-f_t(x)\vert^2\right)^{1/2}\left(\int_N \tilde{f}_{\eps}^2(x)\right)^{1/2}\\
&\qquad \qquad \le \vert \eps-t\vert^{1/2}\left(\int_N \int_t^{\eps} \bigg\vert \frac{\partial f}{\partial s}(s,x)\bigg\vert^2 \right)^{1/2} \left(\int_N \tilde{f}_{\eps}^2(x)\right)^{1/2} \\
&\qquad \qquad \le\vert \eps-t\vert^{1/2}\left(\int_{[t,\eps]\times N} \vert df\vert^2\right)^{1/2}\left(\int_N \tilde{f}_{\eps}^2(x)\right)^{1/2},
\end{align*}
and in the same way
$$
\int_N \vert (\tilde{f}_{\eps}(x)-\tilde{f}_t(x))f_{t}(x)\vert \le
\vert \eps-t\vert^{1/2}\left(\int_{[t,\eps]\times N} \vert d\tilde{f}\vert^2\right)^{1/2}\left(\int_N f_{t}^2(x)\right)^{1/2}.
$$
\end{proof}

\subsection{Steklov eigenvalues and quasi-isometries}
In this section we recall the definition of a quasi-isometry and results for Steklov eigenvalues under quasi-isometries as these concepts will be useful in the proofs of \eqref{inegsharp: M_C} and \eqref{case large n}. The results are exactly those of \cite{Br2022}.

\begin{definition}
Two Riemanian metrics $g_1$ and $g_2$ on a manifold $M$ are \emph{quasi-isometric with ratio $K \geq 1$} if for all $q \in M$ and for all non-zero $v\in T_qM$,
$$\frac{1}{K} \leq \frac{g_1(v,v)}{g_2(v,v)} \leq K.$$
More generally, a diffeomorphism $\phi:(M_1,g_1)\to (M_2,g_2)$ between two Riemannian manifolds is a quasi-isometry with ratio $K$ if $\phi^\star g_2$ is quasi-isometric to $g_1$ with ratio $K$.
\end{definition}
The following result is \cite[Proposition 2.4]{Br2022}.
\begin{proposition}
    Let $M$ be a Riemannian manifold of dimension $m$. If $g_1$ and $g_2$ are two Riemannian metrics on $M$ that are quasi-isometric with ratio $K$, then the Steklov eigenvalues with respect to $g_1$ and to $g_2$ satisfy
    $$ \frac{1}{K^{m+1/2}} \leq \frac{\sigma_k(M,g_1)}{\sigma_k(M, g_2)} \leq K^{m +1/2}.$$
\end{proposition}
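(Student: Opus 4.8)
The plan is to deduce the statement directly from the variational (min--max) characterisation of the Steklov eigenvalues, in the same way as the classical comparison results for the Laplace spectrum. Recall that for a compact Riemannian manifold $(M,g)$ with boundary, of dimension $m$,
$$\sigma_k(M,g)=\min_{\substack{E\subset W^{1,2}(M)\\ \dm E=k+1}}\ \max_{u\in E\setminus\{0\}}\ \frac{\int_M|\nabla_g u|_g^2\,dV_g}{\int_{\partial M}u^2\,dA_g}.$$
Since $g_1$ and $g_2$ are quasi-isometric, they induce equivalent norms on $W^{1,2}(M)$ and define the same trace space on $\partial M$; in particular the min--max ranges over the very same family of $(k+1)$-dimensional subspaces for both metrics, so it suffices to compare the two Rayleigh quotients pointwise in $u$.

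First I would record the elementary consequences of the inequality $K^{-1}g_2\le g_1\le Kg_2$ between quadratic forms on each $T_qM$. Passing to determinants in a common coordinate basis gives $K^{-m/2}\,dV_{g_2}\le dV_{g_1}\le K^{m/2}\,dV_{g_2}$ on $M$; restricting the metrics to the $(m-1)$-dimensional boundary $\partial M$ and doing the same gives $K^{-(m-1)/2}\,dA_{g_2}\le dA_{g_1}\le K^{(m-1)/2}\,dA_{g_2}$. Dualising the metric inequality (the inverse of a positive form reverses such bounds with reciprocal ratio) yields $K^{-1}|\nabla_{g_2}u|_{g_2}^2\le|\nabla_{g_1}u|_{g_1}^2\le K|\nabla_{g_2}u|_{g_2}^2$ for every $u\in W^{1,2}(M)$.

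Combining these, the numerator of the $g_1$-Rayleigh quotient is at most $K^{1+m/2}$ times the $g_2$-numerator, while its denominator is at least $K^{-(m-1)/2}$ times the $g_2$-denominator. Writing $R_g(u):=\int_M|\nabla_g u|_g^2\,dV_g\big/\int_{\partial M}u^2\,dA_g$, this means
$$R_{g_1}(u)\ \le\ K^{\,1+\frac m2}\cdot K^{\frac{m-1}{2}}\,R_{g_2}(u)\ =\ K^{\,m+\frac12}\,R_{g_2}(u)$$
for every nonzero $u\in W^{1,2}(M)$. Taking the maximum over $u\in E$ and then the minimum over $(k+1)$-dimensional subspaces $E$ gives $\sigma_k(M,g_1)\le K^{m+1/2}\sigma_k(M,g_2)$. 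Exchanging the roles of $g_1$ and $g_2$, which is legitimate since the quasi-isometry hypothesis is symmetric, gives $\sigma_k(M,g_2)\le K^{m+1/2}\sigma_k(M,g_1)$, and the two bounds together are exactly the claimed two-sided estimate.

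There is no genuine obstacle in this argument; it is a routine application of the variational principle together with equivalence of the relevant Sobolev and trace norms. The only point that requires a little care is the bookkeeping of exponents: the volume form contributes $K^{m/2}$ and the gradient an extra $K$, for a total of $K^{1+m/2}$ in the numerator, while the $(m-1)$-dimensional boundary area form contributes $K^{(m-1)/2}$ in the denominator, and $1+\tfrac m2+\tfrac{m-1}{2}=m+\tfrac12$.
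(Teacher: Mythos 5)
Your proof is correct, and it is the standard quasi-isometry comparison argument (volume/area form bounds and dual bounds for the gradient feeding into the min--max characterisation). The paper does not reprove this proposition; it cites it from Brisson's paper \cite{Br2022}, where the same approach is used. Your exponent bookkeeping is right: the interior integrand contributes $K^{1+m/2}$, the boundary area form contributes $K^{(m-1)/2}$, and $1+\tfrac m2+\tfrac{m-1}{2}=m+\tfrac12$; the reverse inequality follows by symmetry of the quasi-isometry hypothesis.
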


The following lemma will enable us to employ Lemma \ref{lemma:deBruno} and Lemma \ref{lemma:deBrunobis} in the proofs of \eqref{inegsharp: M_C} and \eqref{case large n} that follow.
\begin{lemma}\label{lemma:quasi-iso}
Given $\rho>0$, it is classical to construct a new Riemannian metric $g_{\rho}$ on $\Omega$ with the following properties:
\begin{itemize}
    \item $g_{\rho}=g_{\Omega}$ on $\partial \Omega$,
    \item $g_{\rho}$ is quasi-isometric to the initial metric $g_{\Omega}$ with ratio $1+\rho$,
    \item there is $\delta=\delta(\rho) \to 0$ as $\rho \to 0$, such that in the neighbourhood $T_{\delta}=\{q\in \Omega: d_{g_{\Omega}}(q,\partial \Omega)< \delta\}$, $(T_{\delta},g_{\rho})$ is isometric to the product $(\partial \Omega,g_{\rho})\times [0,\delta]$. 
\end{itemize}
\end{lemma}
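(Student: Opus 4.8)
The plan is to build $g_{\rho}$ by modifying $g_{\Omega}$ only inside a collar neighbourhood of $\partial\Omega$, using boundary normal (Fermi) coordinates, and interpolating the tangential part of the metric between its boundary value and its ambient value. First I would fix $\eps_0>0$ small enough that the inward normal exponential map identifies $T_{\eps_0}=\{q\in\Omega:d_{g_\Omega}(q,\partial\Omega)<\eps_0\}$ diffeomorphically with $\partial\Omega\times[0,\eps_0)$, the coordinate $t$ corresponding to $d_{g_\Omega}(\cdot,\partial\Omega)$. In these coordinates
$$g_\Omega=dt^2+g_t,$$
where $(g_t)_{t\in[0,\eps_0)}$ is a smooth family of Riemannian metrics on $\partial\Omega$ with $g_0=g_\Omega|_{\partial\Omega}$.

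Next, given $\rho>0$, since $t\mapsto g_t$ is smooth and positive-definiteness is an open condition, there is $\delta=\delta(\rho)>0$ with $2\delta\le\eps_0$ such that, as quadratic forms on $T\partial\Omega$,
$$\tfrac{1}{1+\rho}\,g_t\le g_0\le(1+\rho)\,g_t\qquad\text{for all }t\in[0,2\delta];$$
one may take $2\delta(\rho)$ to be, for instance, the minimum of $\eps_0$, of $\rho$, and of the supremum of admissible values, which guarantees $\delta(\rho)\to0$ as $\rho\to0$. Fix a smooth cutoff $\chi:[0,\eps_0)\to[0,1]$ with $\chi\equiv0$ on $[0,\delta]$ and $\chi\equiv1$ on $[2\delta,\eps_0)$, and define, on the collar,
$$g_\rho:=dt^2+\bigl(\chi(t)\,g_t+(1-\chi(t))\,g_0\bigr),$$
extended by $g_\rho:=g_\Omega$ on $\Omega\setminus T_{2\delta}$. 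Since $\chi(t)g_t+(1-\chi(t))g_0=g_t$ for $t\ge2\delta$, the two definitions agree on the overlap, so $g_\rho$ is a well-defined smooth Riemannian metric on $\Omega$. On $T_\delta=\{t<\delta\}$ we have $\chi\equiv0$, hence $g_\rho=dt^2+g_0$, so $(T_\delta,g_\rho)$ is isometric to $(\partial\Omega,g_0)\times[0,\delta]=(\partial\Omega,g_\rho)\times[0,\delta]$ via the Fermi chart; in particular $g_\rho=g_\Omega$ on $\partial\Omega$. This establishes the first and third bullet points.

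For the quasi-isometry estimate, the metrics coincide outside $T_{2\delta}$, so it suffices to bound the ratio on the collar. There $g_\Omega=dt^2+g_t$ and $g_\rho=dt^2+\tilde{g}_t$ with $\tilde{g}_t:=\chi(t)g_t+(1-\chi(t))g_0$ a convex combination of $g_t$ and $g_0$; since both $g_t$ and $g_0$ lie between $\tfrac{1}{1+\rho}g_t$ and $(1+\rho)g_t$ on $[0,2\delta]$, so does $\tilde{g}_t$. Writing a tangent vector as $v=v_0\,\partial_t+w$ with $w$ tangent to $\partial\Omega$, and using that adding a common nonnegative term $v_0^2$ to numerator and denominator pushes a ratio towards $1$, one obtains
$$\frac{1}{1+\rho}\le\frac{g_\rho(v,v)}{g_\Omega(v,v)}=\frac{v_0^2+\tilde{g}_t(w,w)}{v_0^2+g_t(w,w)}\le 1+\rho$$
for every nonzero $v$, which is the second bullet point. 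No genuine obstacle arises here; the only point needing a little care is the bookkeeping that keeps the quasi-isometry ratio equal to $1+\rho$ (rather than a power of it) and the choice of $\delta(\rho)$ ensuring $\delta(\rho)\to0$ as $\rho\to0$.
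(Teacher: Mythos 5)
Your proof is correct and follows the same standard approach that the paper outlines by reference: use boundary normal (Fermi) coordinates to write $g_\Omega=dt^2+g_t$ on a collar, replace $g_t$ by the boundary value $g_0$ near $t=0$ via a cutoff interpolation to force a product structure there, and verify that the convex combination keeps the quasi-isometry ratio at $1+\rho$. The paper itself defers these details to \cite[Proposition 2.2]{Br2022} and the proof of \cite[Theorem 1.1]{CoElGi2019}, so your argument is essentially a self-contained version of the cited construction.
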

This lemma follows by using \cite[Proposition 2.2]{Br2022} to make a neighbourhood of the boundary $\partial \Omega$ isometric to a product, and then proceeding exactly as in the proof of Theorem 1.1, p. 2822-23 in \cite{CoElGi2019} to extend the metric to $\Omega$.

\section{Proofs of the theorems} \label{proof: theorems}

In this section, we prove the main theorems. 

\subsection{Proof of Theorem \ref{thm:MainIntro}} The proof of Theorem \ref{thm:MainIntro} goes, roughly speaking, as follows: in general, it will be straightforward to find upper bounds for the eigenvalues, but difficult to show that they are sharp, when they are.

\begin{proof}[Proof of Inequality (\ref{ineq:introgeneral}).]
In~\eqref{ineq:consttest} we have already obtained that 
$$\sigma_k(M_h)\leq \lambda_k\frac{\int_{\Omega}h^{n-2}\,dV_\Omega}{\int_{\partial\Omega}h^n\,dA_{\partial\Omega}}.$$
The inequality has to be strict. Indeed, if $\sigma_{\lambda_k,0}(h)=\lambda_k\frac{\int_{\Omega}h^{n-2}\,dV_\Omega}{\int_{\partial\Omega}h^n\,dA_{\partial\Omega}}$, then $a\equiv 1$ must be an eigenfunction corresponding to $\sigma_{\lambda_k,0}(h)$. However, the strong form of the eigenvalue equation gives:
$0=L_ha+\lambda_k h^{-2}a=\lambda_k h^{-2} a.$ From which it follows that $\lambda_k=0$, contradicting the hypothesis.
\end{proof}

\begin{proof}[Proof of Inequality \eqref{intro:doubleinequality}.]
We note that, from Inequality \eqref{ineq:introgeneral}, using that $h \leq C$ on $\Omega$ and $h=1$ on $\partial \Omega$, we obtain
  $$\sigma_k(M_h) < C^{n-2}\frac{\lambda_k \vert \Omega \vert}{\vert \partial \Omega \vert}.$$
  
To prove Inequality \eqref{intro:doubleinequality}, for $a\in W^{1,2}(\Omega)$ and fixed $\lambda > 0$, since $h=1$ on $\partial\Omega$, we have
\begin{align*} 
R_{\lambda,h}(a)&=\frac{\int\limits_\Omega (|da|^2h^n+\lambda a^2h^{n-2})\,dV_\Omega}{\int\limits_{\partial\Omega}a^2\,dA_{\partial\Omega}} \\
& \ \ \ \le \frac{\int\limits_\Omega (|da|^2C^n+\lambda a^2C^{n-2})\,dV_\Omega}{\int\limits_{\partial\Omega}a^2\,dA_{\partial\Omega}}=C^n\frac{\int\limits_\Omega (|da|^2+\frac{\lambda}{C^2} a^2 )\,dV_\Omega}{\int\limits_{\partial\Omega}a^2\,dA_{\partial\Omega}},
\end{align*}
and the expression
$$
\frac{\int\limits_\Omega (|da|^2+\frac{\lambda}{C^2} a^2 )\,dV_\Omega}{\int\limits_{\partial\Omega}a^2\,dA_{\partial\Omega}}
$$
  is the Rayleigh quotient for the product metric $g_C=g_{\Omega} \oplus C^2 g_{\Sigma}$. If $\lambda_k$ is the $k$-th eigenvalue of $(\Sigma,g_{\Sigma})$, we note that $\frac{\lambda_k}{C^2}$ is the $k$-th eigenvalue of $(\Sigma,C^2g_{\Sigma})$.
  Hence, by \eqref{ineq:kcompk0},
  \begin{align}
      \sigma_k(M_h) &\leq \sigma_{\lambda_k,0}(h) \nonumber\\
  &= \min\bigg\{R_{\lambda_k,h}(a):a\in W^{1,2}(\Omega)\setminus\{0\}\bigg\} \nonumber\\
  & \leq \min \bigg\{ C^n\frac{\int\limits_\Omega (|da|^2+\frac{\lambda_k}{C^2} a^2 )\,dV_\Omega}{\int\limits_{\partial\Omega}a^2\,dA_{\partial\Omega}}: a\in W^{1,2}(\Omega)\setminus\{0\}  \bigg\} \label{RQmc}\\
  &= C^n \sigma_k(M_C). \nonumber
  \end{align}

  The inequality has to be strict: in case of equality, there is an eigenfunction $a$ with
  $$\frac{\int\limits_\Omega (|da|^2h^n+\lambda_k a^2h^{n-2})\,dV_\Omega}{\int\limits_{\partial\Omega}a^2\,dA_{\partial\Omega}} = \frac{\int\limits_\Omega (|da|^2C^n+\lambda_k a^2C^{n-2})\,dV_\Omega}{\int\limits_{\partial\Omega}a^2\,dA_{\partial\Omega}},$$
  so
  $$ |da|^2(h^n - C^n) + \lambda_k a^2(h^{n-2} - C^{n-2}) = 0 \text{ almost everywhere.}$$
  Since $h<C$ in a neighbourhood of $\partial \Omega$, $a$ has to be equal to $0$ in this neighbourhood, which is not possible.

  Taking $a=1$ as a test function in \eqref{RQmc} gives
  $$C^n\sigma_k(M_C) \leq C^{n-2}\frac{\lambda_k \vert \Omega \vert}{\vert \partial \Omega \vert}.$$
  The fact that the inequality has to be strict follows by the same argument as in the proof of Inequality (\ref{ineq:introgeneral}).
  \end{proof}

  \begin{proof}[Proof of statement (\ref{inegsharp: M_C}).] 
  By Inequality \eqref{intro:doubleinequality}, we already have that 
  $$\sup\{\sigma_k(M_h), h\le C, h=1\ \text{on} \  \partial \Omega\} \le C^n \sigma_k(M_C).$$ 
  But it is more difficult to show that the supremum is equal to $C^n \sigma_k(M_C)$.

  The idea itself to show this fact is simple: we construct a family of metrics $g(h_t)=g_{\Omega}\oplus h_t^2g_{\Sigma}$, $0<t<1$ where $h_t$ interpolates between the value $1$ on $\partial \Omega$ and the value $C$ outside a small neighbourhood of width $t$ of $\partial \Omega$. One may hope that the spectrum of $g(h_t)$ will converge to $C^n \sigma_k(M_C)$ as $t\to 0$. The difficulty is that the eigenvalues $\sigma_k(M_{h_t})$ depend on $t$, which makes the comparison more delicate.

We make a construction in three steps:
\begin{enumerate}
    \item[(i)]  Firstly, we replace the metric $g_{\Omega}$ on $\Omega$ by a new metric $g_{\rho}$ which is very close to $g_{\Omega}$ and is quasi-isometric to a product $\partial \Omega \times I$ in a neighbourhood of $\partial \Omega$, where $I$ is a small interval.
    \item[(ii)] Next, we introduce a domain $\Omega_{\delta}$ which is strictly contained in $\Omega$ and is quasi-isometric to $(\Omega,g_{\rho})$. 
    \item[(iii)] We then construct a family of warping functions which take the value $1$ on $\partial \Omega$ and the value $C$ on $\Omega_{\delta}$. This allows us to compare the eigenvalue $\sigma_{k}(M_h)$ with the eigenvalue $C^n\sigma_k(M_C)$, by using Lemma \ref{lemma:deBrunobis}.
\end{enumerate}

\noindent
Firstly, Lemma \ref{lemma:quasi-iso} gives (i).
Next, to achieve (ii), we
  introduce the domain
  $\Omega_{\delta} \subset \Omega$, where
  $$\Omega_{\delta}=\{p\in \Omega:d_{g_{\rho} }(p,\partial \Omega)\ge \delta^2\}.$$ 
  The Riemannian manifold $(\Omega_{\delta},g_{\rho})$ is quasi-isometric to $(\Omega, g_{\rho})$ with ratio $1+\delta$. Indeed, this follows from the fact that the intervals $[\delta^2,\delta]$ and $[0,\delta]$ are quasi-isometric with ratio $1+\delta$.

  At this stage, we have three Riemannian manifolds which are quasi-isometric with ratio close to $1$ : $(\Omega,g_{\Omega})$, $(\Omega,g_{\rho})$ and $(\Omega_{\delta},g_{\rho})$.

  To achieve (iii), we then consider $(\Omega,g_{\rho}) \times (\Sigma, g_{\Sigma})$ which is quasi-isometric to $(\Omega,g_{\Omega}) \times (\Sigma, g_{\Sigma})$ with ratio close to $1$. This implies that the two warped product metrics
  $g_{\Omega}\oplus h^2 g_{\Sigma}$ and $g_{\rho}\oplus h^2 g_{\Sigma}$ are also quasi-isometric with ratio close to $1$ for any $h$. The same property is true for
  $(\Omega,g_{\rho}) \times (\Sigma, g_{\Sigma})$ which is quasi-isometric to $(\Omega_{\delta},g_{\rho}) \times (\Sigma, g_{\Sigma})$.

  We can now construct the family of warping functions $h_{\delta}$ on $\Omega$: they take the value $1$ on $\partial \Omega$, the value $C$ on $\Omega_{\delta}$ and on $\Omega \setminus \Omega_{\delta}$, they satisfy $1\le h_{\delta} \le C$. In particular, as $h=C$ on $\Omega_{\delta}$, we observe that
  $(\Omega,g_{\rho})\times_C (\Sigma,g_{\Sigma})$ is quasi-isometric to  $(\Omega_{\delta},g_{\rho})\times_C (\Sigma,g_{\Sigma})$.

  We fix an eigenvalue $\lambda$ of $\Sigma$ and we compare $\sigma_{\lambda,l}((\Omega,g_{\rho})\times_{h_{\delta}} \Sigma)$ with $\sigma_{\lambda,l}((\Omega_{\delta},g_{\rho})\times_{h_{\delta}} \Sigma)$.
  Let $f,\tilde{f}$ be normalised eigenfunctions for $\sigma_{\lambda,i}((\Omega,g_{\rho})\times_{h_{\delta}} \Sigma)$, $\sigma_{\lambda,j}((\Omega,g_{\rho})\times_{h_{\delta}} \Sigma)$ respectively such that
$$
\int_{\partial \Omega } f\tilde{f}=\delta_{ij}
$$
and consider the restrictions $f_\delta,\tilde{f}_{\delta}$ of $f, \tilde{f}$ to $\Omega_{\delta}$. 
Since $T_{\delta}=\{p\in \Omega: d_{g_{\Omega}}(p,\partial \Omega)< \delta\}$, we have that $(T_{\delta},g_{\rho})$ is isometric to the product $(\partial \Omega,g_{\rho})\times [0,\delta]$. In the neighbourhood $T_\delta$, we can apply Lemma \ref{lemma:deBrunobis} with $t=0$ and $\eps=\delta$ to obtain
\begin{align*}
&\bigg\vert \int_{\partial \Omega_\delta} f_{\delta}(x)\tilde{f}_{\delta}(x)dx-\int_{\partial \Omega} f(x)\tilde{f}(x)dx \bigg\vert \\ 
& \qquad \qquad \le 
\vert \delta\vert^{1/2} \left(\left(\int_{[0,\delta]\times \partial \Omega} |df|^{2} \right)^{1/2}\left(\int_{\partial \Omega} \tilde{f}_{\delta}^{2}\right)^{1/2} \right. \nonumber\\
& \qquad \qquad \qquad \qquad \qquad \qquad \left.+ \left(\int_{[0,\delta]\times \partial \Omega}|d\tilde{f}|^{2} \right)^{1/2}\left(\int_{\partial \Omega} f^{2}\right)^{1/2}  \right)\\
& \qquad \qquad \le 
\vert \delta\vert^{1/2} \left(\sigma_{\lambda,i}((\Omega,g_{\rho})\times_{h_{\delta}} \Sigma)^{1/2}\left(\int_{\partial \Omega} \tilde{f}_{\delta}^{2}\right)^{1/2} \right. \nonumber \\
& \qquad \qquad \qquad \qquad \qquad \qquad \left. + \left(\sigma_{\lambda,j}((\Omega,g_{\rho})\times_{h_{\delta}} \Sigma) \right)^{1/2}\left(\int_{\partial \Omega} f^{2}\right)^{1/2}  \right).
\end{align*}
Now $\int_{\partial \Omega} f^{2} = 1$ and, by Lemma \ref{lemma:deBruno},
\begin{equation*}
       \bigl|\|\tilde{f}_\delta\|_{L^2(\partial \Omega)}-\|\tilde{f}\|_{L^2(\partial \Omega)}\bigr| \leq \delta^{1/2}\left(\int_{[0,\delta]\times \partial \Omega}|d f|^2\,dV\right)^{1/2},
\end{equation*}
so $\|\tilde{f}_\delta\|_{L^2(\partial \Omega)} \to \|\tilde{f}\|_{L^2(\partial \Omega)}$ as $\rho \to 0$.
Therefore
$$
\int_{\partial \Omega_{\delta} } f_\delta \tilde{f}_\delta =\delta_{ij}+o(1),
$$
as $\rho\to 0$.

Moreover
$$
\int_{\Omega_{\delta}}  (\vert df \vert^2h_{\delta}^n+\lambda f^2 h_{\delta}^{n-2})dV_{\Omega} < \int_{\Omega}  (\vert df \vert^2h_{\delta}^n+\lambda f^2 h_{\delta}^{n-2})dV_{\Omega}
$$
and the same is also true for $\tilde{f}$.

Hence, by the min-max characterisation, we deduce that, for a fixed $k$ and for $l\le k$, $$\sigma_{\lambda,l}((\Omega_{\delta},g_{\rho})\times_{h_{\delta}} \Sigma)\le \sigma_{\lambda,l}((\Omega,g_{\rho})\times_{h_{\delta}} \Sigma)+o(1)$$ as $\rho \to 0$.

As $(\Omega_{\delta},g_{\rho})\times_{h_{\delta}} \Sigma =(\Omega_{\delta},g_{\rho})\times_{C} \Sigma$ and $(\Omega_{\delta},g_{\rho})\times_{C} \Sigma$ is quasi-isometric to $(\Omega,g_{\rho})\times_{C} \Sigma$ with ratio close to $1$, we have the conclusion.
\end{proof}

\medskip
\noindent
Identity \eqref{Case n>2} was already proved in Section \ref{Spectrum $M_C$}.

\subsection{Proof of Theorem \ref{thm:Stability}}
In this section, we present a quantitative improvement of inequality~\eqref{ineq:introgeneral}, from which Theorem \ref{thm:Stability} will follow.

\begin{proposition}\label{prop:stablebody}
    Let $D\subset\Omega$ be a subdomain and consider a continuous function $f$ that is supported in $D$ with $f\geq 0$ everywhere and $f>0$ in the interior of $D$. Then,
    \begin{align*}
        \sigma_k(M_h)\leq-\frac{\lambda_k^2(\int_Dfh^{n-2}\,dV_{\Omega})^2}{\int\limits_D (|df|^2h^n+\lambda_k f^2h^{n-2})\,dV_{\Omega}}{\bigg(\int\limits_{\partial\Omega}h^n\,dA_{\partial\Omega}\bigg)^{-1}}\,+\lambda_k\frac{\int\limits_{\Omega}h^{n-2}\,dV_{\Omega}}{\int\limits_{\partial\Omega}h^n\,dA_{\partial\Omega}}.
    \end{align*}
\end{proposition}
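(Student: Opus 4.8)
The plan is to feed a one-parameter family of trial functions into the variational characterisation~\eqref{eq:cara3} of $\sigma_{\lambda_k,0}(h)$ and then optimise. Concretely, for $t\in\R$ I would test the Rayleigh quotient $R_{\lambda_k,h}$ against $a=1-tf$. Since $f$ is supported in $D\subset\Omega$ it vanishes on $\partial\Omega$, so $a\equiv 1$ there and the denominator $\int_{\partial\Omega}a^2h^n\,dA_{\partial\Omega}=\int_{\partial\Omega}h^n\,dA_{\partial\Omega}$ does not depend on $t$. For the numerator, using $da=-t\,df$, $a^2=1-2tf+t^2f^2$ and once more that $f$ is supported in $D$, a direct computation gives
\begin{align*}
\int_\Omega\big(|da|^2h^n+\lambda_k a^2h^{n-2}\big)\,dV_\Omega
&=\lambda_k\int_\Omega h^{n-2}\,dV_\Omega-2\lambda_k t\int_D fh^{n-2}\,dV_\Omega\\
&\qquad\qquad+t^2\int_D\big(|df|^2h^n+\lambda_k f^2h^{n-2}\big)\,dV_\Omega.
\end{align*}

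Next I would read this as a quadratic $At^2-2Bt+E$ in $t$, with $A=\int_D(|df|^2h^n+\lambda_k f^2h^{n-2})\,dV_\Omega$, $B=\lambda_k\int_D fh^{n-2}\,dV_\Omega$ and $E=\lambda_k\int_\Omega h^{n-2}\,dV_\Omega$. Because $k\geq 1$ forces $\lambda_k>0$ and $f>0$ in the interior of $D$, the leading coefficient $A$ is strictly positive, so the minimum over $t\in\R$ is attained at $t=B/A$ with value $E-B^2/A$. Combining this with $\sigma_k(M_h)\leq\sigma_{\lambda_k,0}(h)=\min_a R_{\lambda_k,h}(a)$ from~\eqref{ineq:kcompk0} and~\eqref{eq:cara3}, and dividing through by $\int_{\partial\Omega}h^n\,dA_{\partial\Omega}$, yields exactly the stated bound $\sigma_k(M_h)\leq (E-B^2/A)\big(\int_{\partial\Omega}h^n\,dA_{\partial\Omega}\big)^{-1}$.

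Finally, two minor points would need to be recorded. First, $a=1-tf\in W^{1,2}(\Omega)$ requires $f$ to be at least Lipschitz, which is the case in the application to Theorem~\ref{thm:Stability} (tent functions); should $\int_D|df|^2h^n$ fail to be finite, then $A=+\infty$, the subtracted term is $0$, and one simply recovers the constant-function bound~\eqref{ineq:consttest}, so nothing is lost. Second, the strict positivity $A>0$, needed for the optimal $t$ to be well defined, is exactly what the hypotheses $f>0$ in the interior of $D$ and $\lambda_k>0$ for $k\geq1$ guarantee. There is no substantive obstacle: the content of the statement lies entirely in the choice of the family $a=1-tf$, and the quadratic optimisation in $t$ is what upgrades the constant-function estimate~\eqref{ineq:consttest} into the quantitative improvement required for the stability theorem.
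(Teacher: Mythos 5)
Your proposal is correct and follows essentially the same route as the paper: test the Rayleigh quotient $R_{\lambda_k,h}$ against the one-parameter family $a=1-tf$, observe that the boundary integral is unaffected because $f$ vanishes on $\partial\Omega$, expand the numerator into a quadratic in $t$, and minimise at $t_0=B/A$ to obtain $E-B^2/A$ before dividing by $\int_{\partial\Omega}h^n\,dA_{\partial\Omega}$. The only added content in your write-up is the remark that $f$ must belong to $W^{1,2}(\Omega)$ (not merely be continuous) for the quantity $\int_D|df|^2h^n$ to be meaningful, and that $A>0$ makes the optimal $t_0$ well defined; the paper's statement glosses over the first point, so your observation is a small but genuine tightening of the hypotheses rather than a different argument.
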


\begin{proof}
     For $t\in\R$, we have
    \begin{align*}
    &R_{\lambda_k,h}(1-tf)=\frac{\int\limits_\Omega (t^2|df|^2h^n+\lambda_k (1-tf)^2h^{n-2})\,dV_{\Omega}}{\int\limits_{\partial\Omega}h^n\,dA_{\partial\Omega}}\\
    &=
    \frac{t^2(\int\limits_D (|df|^2h^n+\lambda_k f^2h^{n-2})\,dV_{\Omega}) - 2\lambda_k t\int\limits_D fh^{n-2}\,dV_{\Omega}}{\int\limits_{\partial\Omega}h^n\,dA_{\partial\Omega}}\,+\lambda_k\frac{\int\limits_{\Omega}h^{n-2}\,dV_{\Omega}}{\int\limits_{\partial\Omega}h^n\,dA_{\partial\Omega}},
    \end{align*}
    where we used that $f=0$ on $\partial \Omega$.
    This expression is quadratic in $t$, its minimum is achieved when
    $$t=t_0:=\frac{\lambda_k\int_D fh^{n-2}\,dV_{\Omega}}{\int\limits_D (|df|^2h^n+\lambda_k f^2h^{n-2})\,dV_{\Omega}}.$$
    Whence, the minimal value of $R_{\lambda_k,h}(1 - tf)$ leads to the following improvement of inequality~\eqref{ineq:introgeneral}:
\begin{align*}
\sigma_k(M_h)&\leq \sigma_{\lambda_k,0}(h) \leq R_{\lambda_k,h}(1 - t_0f)\\
&=-\frac{\lambda_k^2(\int_Dfh^{n-2}\,dV_{\Omega})^2}{\int\limits_D (|df|^2h^n+\lambda_k f^2h^{n-2})\,dV_{\Omega}}{\left(\int\limits_{\partial\Omega}h^n\,dA_{\partial\Omega}\right)^{-1}}\,+\lambda_k\frac{\int\limits_{\Omega}h^{n-2}\,dV_{\Omega}}{\int\limits_{\partial\Omega}h^n\,dA_{\partial\Omega}}.
\end{align*}
\end{proof}    

To prove Theorem \ref{thm:Stability} we consider the case where $h\equiv 1$ on $\partial\Omega$ and fibers of dimension $n=2$:
\begin{equation}\label{ineq:withdeficit}
\sigma_k(M_h)\leq \frac{-\lambda_k^2(\int_Df\,dV_{\Omega})^2}{|\partial\Omega|\int\limits_D (|df|^2h^2+\lambda_k f^2)\,dV_{\Omega}}\,+\lambda_k\frac{|\Omega|}{|\partial\Omega|}.    
\end{equation}

Let $\delta:= \frac{\lambda_k \vert \Omega\vert}{\vert \partial \Omega\vert}-\sigma_k(M_h)>0$.
If $D$ is a  ball $B(q,r)\subset \Omega$, then let $f:D\to\R$ be defined by $f(x)=d(x,\partial D)$. We can then compute $|df|=1$ and $f\leq r$, and also $f\geq r/2$ on $B(q,\frac{r}{2})$ so that
\begin{align*}
\sigma_k(M_h)\leq \frac{-\lambda_k^2r^2 |B(q,\frac{r}{2})|}{4|\partial\Omega|(\int\limits_D h^2+\lambda_k r^2|D|)}\,+\lambda_k\frac{|\Omega|}{|\partial\Omega|}.
\end{align*} 
It follows that
$$
\frac{\lambda_k^2r^2 |B(q,\frac{r}{2})|}{4|\partial\Omega|(\int\limits_D h^2+\lambda_k r^2|D|)} \le \delta,
$$
and
$$
\int_{D}h^2dV_{\Omega}\ge \frac{\lambda_kr^2}{4 \delta}\frac{\vert B(q,\frac{r}{2})\vert}{\vert \partial \Omega\vert}-\lambda_kr^2 \vert D\vert
$$
as required.

\noindent
We  then deduce that \eqref{eq:limstab} follows concluding the proof of Theorem \ref{thm:Stability}.

\subsection{Upper bounds in terms of $L^p$-norms.}

In this section, we prove Corollary \ref{coro: integralbound}, Theorem \ref{thm:3dimconnected}, and Theorem \ref{dim basis 1} which share the constraint that the $L^p$-norm of $h$ is bounded.

\begin{proof}[Proof of Corollary \ref{coro: integralbound}.]
Since $h=1$ on $\partial \Omega$, Inequality \eqref{ineq:introgeneral} becomes
$$ \sigma_k(M_h) < \frac{\lambda_k}{\vert \partial \Omega \vert} \int_{\Omega} h^{n-2} dV_\Omega.$$
Using that $p \geq n-2 \geq 1$ and $|\Omega|<\infty$,  H\"{o}lder's inequality with  conjugate exponents $\frac{p}{n-2}$ and $\frac{p}{p - n + 2}$ gives
\begin{equation*}
    \int_{\Omega} h^{n-2} dV_{\Omega} 
    \leq \Vert h \Vert_{L^p}^{n-2} \vert \Omega \vert^{(p-(n-2))/p}.
\end{equation*}
Hence 
\begin{equation*}
    \sigma_k(M_h) < \frac{\lambda_k \Vert h\Vert_{L^p}^{n-2} \vert \Omega \vert^{(p-(n-2))/p}}{\vert \partial \Omega \vert}.
\end{equation*}
\end{proof}

Next, we prove Theorem \ref{thm:3dimconnected}.
\begin{proof}[Proof of Theorem \ref{thm:3dimconnected}.]
By Lemma \ref{lemma:quasi-iso},
up to a quasi-isometry, for $\eps$ sufficiently small, we can suppose that the neighbourhood 
$$
\{p \in \Omega: d(p,\partial \Omega) \le 2\epsilon \}
$$
is isometric to $\partial \Omega \times [0,2\epsilon]$.  
We have $\vert \partial \Omega \times [\eps,2\eps]\vert =\eps \vert \partial \Omega\vert.$

We choose $h_{\eps}=\left(\frac{C}{2 \eps \vert \partial \Omega\vert}\right)^{1/p}$ on $[\eps, 2\eps]$ so that $\int_{\partial \Omega \times [\eps,2\eps]}h_{\eps}^p= \frac{C}{2}$, and extend $h_{\eps}$ so that :
(i) $\int_{\Omega} h_{\eps}^p \leq C$ and (ii) $h_{\eps}\ge D$ on $[0,2\eps]$. Here, $D$ is just a positive constant: we have to avoid that $h_{\eps} \to 0$ in the interval $[0,2\eps]$.

The first eigenvalue, $\sigma_1(M_{h_{\eps}})$, is the minimum between $\sigma_{0,1}(h_\eps)$ and $\sigma_{\lambda_{1,0}}(h_\eps)$. We consider these two cases separately, and we aim to show that in each case $\sigma_1(M_{h_\eps}) \to \infty$ as $\eps \to 0$. 

\noindent
\textbf{Case of $\sigma_{\lambda_1,0}$}.
Let $a$ be an eigenfunction corresponding to $\sigma_{\lambda_1,0}$ such that $\int_{\partial \Omega} a^2=1$.
We consider this function $a$ on $\partial \Omega \times [\eps,2 \eps]$. 

\noindent
\textbf{First case:} If for each $t\in [\eps,2 \eps]$, $\int_{\partial \Omega \times \{t\}} a^2(x,t) \ge \frac{1}{2}$, then, because $n-2 > p$ and $\lambda_1(\Sigma)>0$, we have
\begin{align*}
\int_{\partial \Omega \times [\eps,2\eps]}\lambda_1(\Sigma) a^2 h^{n-2} 
&\ge \lambda_1(\Sigma)\frac{1}{2} \eps h^{n-2}_{\eps} \\
&= \frac{\lambda_1(\Sigma)}{2} \left(\frac{C}{2 \vert \partial \Omega\vert}\right)^{(n-2)/p} \eps^{(p - n + 2)/p} \to \infty
\end{align*}
as $\eps \to 0$.

\noindent
\textbf{Second case:} In this case, there exists $t\in [\eps,2 \eps]$ such that $\int_{\partial \Omega \times \{t\}} a^2(x,t) < \frac{1}{2}$. 
We use \eqref{eq:lem2.1} from the proof of Lemma \ref{lemma:deBruno} with $f=a$ to obtain:

$$
\bigg\vert \int_{\partial \Omega} a^2(x,t)dA_{\partial \Omega} -\int_{\partial \Omega} a^2(x,0)dA_{\partial \Omega} \bigg \vert \le \vert t\vert^{1/2}\left(\int_{\partial \Omega \times [0,t]} \vert da(x,s)\vert^2 dV_{\Omega}\right)^{1/2},
$$
and, as $\int_{\partial \Omega} a^2(x,0)dA_{\partial \Omega} = 1$, this implies
$$
\int_{\partial \Omega \times [0,t]} \vert da(x,s)\vert^2 dV_{\Omega}\ge \frac{1}{8\eps}
$$
so
$$
\int_{\partial \Omega \times [0,t]} \vert da(x,s)\vert^2 h_\eps^n dV_{\Omega}\ge \frac{D^n}{8\eps}
$$
which tends to $\infty$ as $\eps \to 0$.

\noindent
\textbf{Case of $\sigma_{0,1}$}. 
Let $a$ be an eigenfunction corresponding to $\sigma_{0,1}$. 
We observe that the eigenfunction $a$ has to be orthogonal on $\partial \Omega$ to the eigenfunction for $\sigma_{0,0}$, that is the constant function. So, we have
$$
\int_{\partial \Omega} a^2(x,0)dA_{\partial \Omega} =1, \quad  
\int_{\partial \Omega} a(x,0)dA_{\partial \Omega} =0.
$$
We consider the function $a$ in the product
$\partial \Omega \times [\eps,2\eps]$.

\noindent
\textbf{First case:} Suppose that $\int_{\partial \Omega \times [\eps,2\eps]}\vert da\vert^2 dV_{\Omega} \ge 1$. Then, as before, 
$$\int_{\partial \Omega \times [\eps,2\eps]}\vert da\vert^2 h_\eps^n dV_{\Omega} \geq \left(\frac{C}{2\eps \vert \partial \Omega \vert}\right)^{n/p} \to \infty $$
as $\eps \to 0$.

\noindent
\textbf{Second case:} Suppose that $\int_{\partial \Omega \times [\eps,2\eps]}\vert da\vert^2 dV_{\Omega} \le 1$.

If $\int_{\partial \Omega \times [0,\eps]}\vert da\vert^2 dV_{\Omega} \ge \frac{1}{\eps^{1/2}}$, then, since $h_\epsilon \geq D$ on $[0,\eps]$, we would have that 
$$\sigma_{0,1}(M,h_{\eps}) \geq \int_{\partial \Omega \times [0,\eps]}\vert da\vert^2 h_\eps^n dV_{\Omega}
\geq \frac{D^n}{\eps^{1/2}}\to \infty $$ as $\eps \to 0$.

So it remains to consider the case where $\int_{\partial \Omega \times [0,\eps]}\vert da\vert^2 dV_{\Omega} \le \frac{1}{\eps^{1/2}}$.
For each $t \in [\eps, 2\eps]$, by Lemma \ref{lemma:deBruno}, we have that
\begin{align*}
\bigl|\|a(\cdot, t)\|_{L^2(\partial \Omega)} - \|a(\cdot, 0)\|_{L^2(\partial \Omega)}\bigr|^2
   & \leq t \int_{[0,t]\times \partial \Omega}|d a|^2\,dV_{\Omega} \\
   & \leq 2\eps \int_{[0,2\eps]\times \partial \Omega}|d a|^2\,dV_{\Omega} \\
   & \leq 2\eps^{1/2}(1+\eps^{1/2}).
   \end{align*}
Hence $\int_{\partial \Omega } a^2(x,t)dA_{\partial \Omega}=1+o(1)$, as $\eps\to 0$.
In addition, by Lemma \ref{lemma:deBrunobis} with $f=a$ and $\tilde{f}=1$, we have that
\begin{align*}
\bigg\vert \int_{\partial \Omega} a(x,t) dA_{\partial \Omega} -\int_{\partial \Omega} a(x,0) dA_{\partial \Omega} \bigg\vert 
&\le t^{1/2} |\partial \Omega|^{1/2} \left(\int_{[0,t]\times \partial \Omega} |da|^{2} dV_{\Omega}\right)^{1/2}\\
&\le (2\eps)^{1/2} |\partial \Omega|^{1/2} \left(\int_{[0,2\eps]\times \partial \Omega} |da|^{2} dV_{\Omega}\right)^{1/2}\\
&\leq |\partial \Omega|^{1/2} (2\eps^{1/2}(1 + 2\eps^{1/2}))^{1/2}.
\end{align*}

\noindent
Therefore $\int_{\partial \Omega } a(x,t)dA_{\partial \Omega}=o(1)$, as $\eps\to 0$.

\noindent
Thus we deduce that
$\int_{\partial \Omega \times \{t\}}\vert da\vert^2dA_{\partial \Omega}\ge (\lambda_1(\partial \Omega)+o(1))\int_{\partial \Omega \times \{t\} } a^2(x,t)dA_{\partial \Omega}$
and
$$
\int_{\partial \Omega \times [\eps,2\eps] }\vert da\vert^2dV_{\Omega}\ge (\lambda_1(\partial \Omega)+o(1))\int_{\partial \Omega \times [\eps,2\eps] } a^2(x,t)dV_{\Omega} = O(\eps).
$$
As $\partial \Omega$ is connected, we have $\lambda_1(\partial \Omega)>0$, and this implies $$\int_{\partial \Omega \times [\eps,2\eps] } \vert da\vert^2h^n_{\eps}dV_{\Omega} \to \infty$$ as $\eps \to 0$.
\end{proof}

\begin{rem}
    By adapting the above proof taking ${h_{\eps}=\frac{1}{\eps^{(n-1)/(n-2)}}}$ on $[\eps,2\eps]$ instead, it follows that if $n \geq 3$ and $\partial \Omega$ is connected, then there exists a sequence of functions $(h_\eps)_\eps$, that tend to infinity close to the boundary of $\Omega$, such that $\sigma_k(M_{h_\eps}) \to \infty$ as $\eps \to 0$.
\end{rem}

\noindent
Finally, we prove Theorem \ref{dim basis 1}.
\begin{proof}[Proof of Theorem \ref{dim basis 1}.]

The function $h$ is uniformly continuous in the interval $[0,L]$, and there exists $\delta>0$ such that $\vert x-y\vert \le \delta$ implies $\vert h(x)-h(y)\vert \le (C/L)^{1/p}.$

Let an even integer $N$ be such that $\frac{L}{N}\le \delta\le \frac{L}{N-1}$.
We introduce the partition of the interval $[0,L]$ given by $0< \frac{L}{N}<...<\frac{kL}{N}<\frac{(k+1)L}{N}<...<\frac{NL}{N}=L$. By hypothesis, if $\frac{kL}{N}\le x,y \le \frac{(k+1)L}{N}$, $\vert h(x)-h(y)\vert \le (C/L)^{1/p}$.
We deduce that, for $\frac{kL}{N}\le x,y \le \frac{(k+1)L}{N}$, if $h(x)\ge 3(C/L)^{1/p}$, then $h(y)\ge 2(C/L)^{1/p}$.

Let $M$ be the number of intervals $[\frac{kL}{N},\frac{(k+1)L}{N}]$ that contain a point $x$ with $h(x)\ge 3(C/L)^{1/p}$. In such intervals, we have for all points $y$ that $h(y)\ge 2(C/L)^{1/p}$. Hence, since $p \geq 1$, $2^p \geq 2$ and we get 
$$\int_{\frac{kL}{N}}^{\frac{(k+1)L}{N}} h^p\ge \frac{2C}{N}.$$ 
If we have $M$ intervals of this type, then
$$\int_{0}^{L}h^p \ge \frac{2CM}{N}. $$ 
As $\int_{0}^{L}h^p \leq C$, we deduce that $M\le \frac{N}{2}$.

We now consider the test function $a$, with $a(0)=-1$ and $a(L)=1$ defined as follows: we select $N/2$ intervals where $h\le 3(C/L)^{1/p}$. This test function will vary only on these intervals, increasing with a slope of $\frac{4}{L}$ and will be constant in all the other intervals, in order to be continuous. In each of the chosen intervals, $a'(x)=\frac{4}{L}$ and $h\le 3(C/L)^{1/p}$ so that $$\int_0^La'(x)^2h^n(x) \le \frac{16}{L^2}\frac{L}{2}(3(C/L)^{1/p})^n=\frac{8 \cdot 3^n C^{n/p}}{L^{\frac{n+p}{p}}},$$
which gives that
$$\sigma_1(M_L) \leq \frac{4 \cdot 3^n C^{n/p}}{L^{\frac{n+p}{p}}}.$$
For $k \geq 1$, using \eqref{ineq:kcompk0} and taking $a=1$ as a test function in the variational characterisation for $\sigma_{\lambda_k,0}(h)$, we obtain
\begin{equation*}
    \sigma_k(M_L) \leq \sigma_{\lambda_k,0}(h) \leq \frac{1}{2}\int_0^L \lambda_k h^{n-2}
    \leq \frac{3^{n-2}C^{(n-2)/p}}{4L^{\frac{n-2-p}{p}}}\lambda_k.
\end{equation*}
\end{proof}

\bibliographystyle{plain}              
\bibliography{bibliography}

\end{document}